\newtheorem{ttheorem}{Theorem}
\newtheorem{theorem}{Theorem}[section]
\newtheorem{lemma}[theorem]{Lemma}
\newtheorem{corollary}[theorem]{Corollaty}
\newtheorem{claim}[theorem]{Claim}
\theoremstyle{definition}
\newtheorem{definition}{Definition}[section]
\theoremstyle{remark}
\newtheorem{remark}{Remark}
\newtheorem{example}[remark]{Example}
\newcommand{\bs}{\mathop{\rm BS}\nolimits}
\newcommand{\conv}{\mathop{\rm conv}\nolimits}
\begin{document}
\sloppy
\ifpdf
\DeclareGraphicsExtensions{.pdf, .jpg, .tif, .mps}
\else
\DeclareGraphicsExtensions{.eps, .jpg, .mps}
\fi

\title{Balanced Sets and Homotopy Invariants of Covers}
\author{Mikhail V. Bludov$^{1,2}$ }
\date{%
    $^1$HSE University, Russian Federation\\%
    $^2$MIPT, Russian Federation \\%
}

\maketitle


\begin{abstract}
In this paper, we study a construction of homotopy invariants of open or closed covers, where the homotopy class is defined relative to a pair $(V,r)$, with $V$ a finite set of points in $\mathbb{R}^d$ and $r$ a point in the interior of their convex hull. We show that the simplicial complex of non-balanced subsets associated with $(V,r)$ has the homotopy type of a sphere, and use this to develop a theory of homotopy invariants of covers relative to balanced sets. A key result is that the homotopy class of a cover depends only, up to an involution, on the balanced-equivalence class of $(V,r)$. As applications, we obtain extension theorems for covers in this setting and derive the KKMS lemma, its analogues, and related combinatorial fixed-point results.
 \footnote{The paper was prepared within the framework of the HSE University Basic Research Program}
\end{abstract}

\section{Introduction}

Suppose $T$ is a Hausdorff paracompact topological space, and let $\mathcal{F} = \{F_1, \dots, F_{n+1}\}$ be an open cover of $T$. It is well known (see, for example, Section~4G of \cite{Hatcher}) that, using a partition of unity, one can associate to $\mathcal{F}$ a map $\rho_{\mathcal{F}} : T \to \Delta^{n}$,
where $\Delta^n = \conv\{v_1, \dots, v_{n+1}\}$ is the $n$-dimensional simplex. If the intersection of all sets in $\mathcal{F}$ is empty, then the image of $\rho_{\mathcal{F}}$ lies in the boundary $S^{n-1}$ of the simplex. Thus the cover $\mathcal{F}$ determines a homotopy class of maps $[\rho_{\mathcal{F}}] \in [T, S^{n-1}]$. In \cite{MusH} it was shown that this class is independent of the choice of partition of unity, and that for closed covers a homotopy class can be defined in a similar fashion.

When $T$ is the $k$-dimensional sphere $S^k$, the homotopy class $[\rho_{\mathcal{F}}]$, when nontrivial, serves as an obstruction to extending the cover $\mathcal{F}$ to the ball $B^{k+1}$. Specifically, if $[\rho_{\mathcal{F}}] \neq 0$ and $G$ is an extension of $\mathcal{F}$ to $B^{k+1}$, then the intersection of all sets in $G$ is nonempty.

Moreover, if $T$ is an oriented closed manifold of dimension $n-1$ such that $T$ is the boundary of a manifold $N$, then this homotopy class is the mapping degree and acts as an obstruction to extending the cover to $N$. Overall, covering extension theorems in \cite{MusH} yield significant generalizations of the KKM lemma and Sperner's lemma.




In \cite{MusH}, a generalized definition of the homotopy class of a cover $\mathcal{F}$ was proposed, where instead of the map $\rho_{\Phi}: T \rightarrow \Delta^{n}$, the map $\rho_{\Phi,V}: T \rightarrow \conv(V)$ is considered, where $V=\{v_1,\dots,v_{n+1}\}$ is a set of points in $\mathbb{R}^d$, and the homotopy class of the cover is defined as a homotopy class of the map $\rho_{\mathcal{F},V}$ relative to a certain point $p \in \mathbb{R}^d$.

\medskip

Balanced subsets is an interesting combinatorial object that first appeared in cooperative game theory through the work of Bondareva \cite{Bon} and Shapley \cite{Sh67}, specifically in the context of the nonemptiness condition for the core. Here we will give the geometric definition of this object.

\begin{definition}
Suppose we have a set of points $V=\{v_1,\dots,v_{m+1}\}$ in $\mathbb{R}^d$ and an arbitrary point $r \in \mathbb{R}^d$. Then a subset $S\subset V$ is called $r$-balanced if $r \in \conv(S)$.
\end{definition}
We are primarily interested in the combinatorics of $r$-balanced subsets, so it is natural to speak of \emph{$\bs$-equivalent pairs}. Specifically, a pair $(V,r)$ in $\mathbb{R}^n$ is said to be $\bs$-equivalent to a pair $(V',r')$ in $\mathbb{R}^d$ if 
\[
S = \{v_{i_1}, \dots, v_{i_k}\} \in \bs(V,r) 
\quad \Longleftrightarrow \quad 
S' = \{v'_{i_1}, \dots, v'_{i_k}\} \in \bs(V',r').
\]

For convenience of notation and exposition, unless said otherwise, we fix a set of points  $V = \{v_1, \dots, v_m\} \subset \mathbb{R}^d$ 
with $\dim(\conv(V)) = d$. Additionally, we always assume that the covering $\mathcal{F} = \{F_1, \dots, F_m\}$ consists of $m$ sets, and each $F_i$ is associated with the corresponding point $v_i$.

Since non-balancedness is a decreasing property, for any finite set $V$ and any point $r \in \mathbb{R}^d$ we may define the \emph{simplicial complex of non-balanced subsets}. Specifically, let $\mathcal{K}(V,r)$ denote the simplicial complex with vertex set $\{v_1, \dots, v_m\}$ and
\[
\mathcal{K}(V,r) = \{ S \subset V \mid r \notin \conv(S) \}.
\]
We call simplices not in $\mathcal{K}(V,r)$ \emph{balanced}. The following theorem describes the homotopy type of the non-balanced complex:

\begin{ttheorem}
\label{NonBalComp1}
   Let $r$ be a point from the interior of $\conv(V)$. Then $|\mathcal{K}(V,r)|$ is homotopy equivalent to a sphere $S^{d-1}$
   
\end{ttheorem}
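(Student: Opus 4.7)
The plan is to reduce the statement to a nerve-type computation on the sphere. First, I would normalize by replacing each $v_i$ with $u_i = (v_i - r)/\|v_i - r\|$; since $r \in \conv(S)$ iff $0 \in \conv(\{v_i - r : v_i \in S\})$ iff $0 \in \conv(\{u_i : v_i \in S\})$ (rescale the coefficients by $1/\|v_i-r\|$ and renormalize), the complex $\mathcal{K}(V,r)$ coincides with $\mathcal{K}(U, 0)$ where $U = \{u_1, \dots, u_m\} \subset S^{d-1}$. So without loss of generality I may assume $r = 0$ and $V \subset S^{d-1}$, with $0$ still in the interior of $\conv(V)$.

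Next, I would reinterpret non-balancedness via separating hyperplanes: a subset $S$ satisfies $0 \notin \conv(S)$ iff there is a unit vector $\xi$ with $\langle \xi, u_i\rangle > 0$ for every $u_i \in S$. Defining the open hemispheres
\[
H_i = \{x \in S^{d-1} : \langle x, u_i\rangle > 0\},
\]
this means $S \in \mathcal{K}(V,0)$ iff $\bigcap_{i \in S} H_i \neq \emptyset$. In other words, $\mathcal{K}(V,0)$ is exactly the nerve of the family $\{H_i\}_{i=1}^m$.

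Now I would verify the hypotheses of the Nerve Lemma. Each $H_i$ is an open geodesically convex subset of $S^{d-1}$, hence contractible. Any nonempty intersection $H_{i_1} \cap \dots \cap H_{i_k}$ is still open and geodesically convex, and lies inside a single open hemisphere, so it is homeomorphic (via radial projection to an affine hyperplane) to a convex open subset of $\mathbb{R}^{d-1}$ and is again contractible. Finally, the hypothesis that $r$ lies in the \emph{interior} of $\conv(V)$ gives that $\{H_i\}$ covers $S^{d-1}$: for any $x \in S^{d-1}$, if $\langle x, u_i\rangle \le 0$ for all $i$, then $\conv(V)$ would lie in the closed halfspace $\{y : \langle y, x\rangle \le 0\}$, contradicting $0 \in \operatorname{int}(\conv V)$ (small perturbations of $0$ in the direction $x$ would have to stay in this halfspace).

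Applying the Nerve Lemma to the good open cover $\{H_i\}$ of the paracompact space $S^{d-1}$ yields $|\mathcal{K}(V,r)| = \mathrm{Nerve}(\{H_i\}) \simeq S^{d-1}$, which is the claim. The only delicate point I anticipate is checking contractibility of arbitrary nonempty intersections and the covering property cleanly; both follow from the interior-of-convex-hull assumption, so this is the place where the full strength of that hypothesis is used.
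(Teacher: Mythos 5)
Your proposal is correct and follows essentially the same route as the paper's proof: normalize to $V \subset S^{d-1}$, $r=0$, identify $\mathcal{K}(V,0)$ with the nerve of the open hemispheres $H_i$, check that the interior hypothesis makes them a good cover of $S^{d-1}$, and apply the Nerve Lemma. No substantive differences to report.
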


This theorem also appears in \cite{Blag} in a somewhat relevant context (there, non-balanced complexes are called zero-avoiding) and is used to establish the colorful Carathéodory theorem and its generalizations. The Alexander dual complexes of non-balanced (zero-avoiding) complexes are also examined in \cite{Blag}.

 In \cite{Sh}, Shapley proposed and proved the KKMS theorem, which connects a closed cover of a simplex to balanced sets. This connection was later extended by Komiya, who showed that covers of polytopes are closely related to balanced subsets of their vertices. Balanced 2-element subsets and the relationship between classical combinatorial fixed-point theorems and balanced subsets of polytopes were studied in \cite{BluMus}, while unbalanced sets were analyzed in \cite{Bil}. While there are several equivalent definitions of balanced subsets, throughout this paper we work only with the geometric definition of balanced subsets.

\medskip

In Section~4 we explain how to associate a homotopy class $[\mathcal{F}_{(V,r)}]$ with a covering $\mathcal{F}$ relative to a pair $(V,r)$. Typically, $[\mathcal{F}_{(V,r)}]$ is either an element of a homotopy group of a sphere or a mapping degree. We then explore connections between the combinatorial properties of balanced sets of points and the homotopy invariants of covers. In particular, we generalize the cover extension theorems from \cite{MusH} to the balanced case:

\begin{ttheorem}
  \label{VExtSph}

  Let $\mathcal{F}=\{F_1, \dots, F_m\}$ be a covering of a sphere $\mathbb{S}^{k}=\partial B^{k+1}$ such that there are no $r$-balanced simplices in the nerve $N(\mathcal{F})$
  Then the covering $\mathcal{F}$ can be extended to a covering $\mathcal{G}$ of the ball $\mathbb{B}^{k+1}$ such that there are no $r$-balanced simplices in the nerve $N(\mathcal{G})$ if and only if $[\mathcal{F}_{(V,r)}]=0$ in  $\pi_{k}(\mathbb{S}^{d-1})$.
\end{ttheorem}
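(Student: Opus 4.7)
The plan is to reduce the extension problem to a null-homotopy extension problem in the non-balanced complex $|\mathcal{K}(V,r)|$, which by Theorem~\ref{NonBalComp1} is homotopy equivalent to $S^{d-1}$. To set things up I would choose a partition of unity $\{\phi_i\}$ subordinate to $\mathcal{F}$ with $\{\phi_i>0\}=F_i$, and form the standard map $\hat\rho_{\mathcal{F}}:S^k\to|\mathcal{K}(V,r)|$ by $x\mapsto\sum_i\phi_i(x)\,[v_i]$. The hypothesis that $N(\mathcal{F})$ contains no $r$-balanced simplex means exactly $N(\mathcal{F})\subseteq\mathcal{K}(V,r)$, so the image of $\hat\rho_{\mathcal{F}}$ lies in $|\mathcal{K}(V,r)|$, and the class $[\mathcal{F}_{(V,r)}]\in\pi_k(S^{d-1})$ is represented by $\hat\rho_{\mathcal{F}}$.

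For necessity, given an extension $\mathcal{G}$, I would extend the partition of unity to $\{\psi_i\}$ on $B^{k+1}$ with $\psi_i|_{S^k}=\phi_i$ and $\{\psi_i>0\}=G_i$; this is available via paracompactness together with a collar neighborhood of $S^k$. The resulting map $\hat\rho_{\mathcal{G}}:B^{k+1}\to|\mathcal{K}(V,r)|$ extends $\hat\rho_{\mathcal{F}}$, and since $B^{k+1}$ is contractible, this forces $[\hat\rho_{\mathcal{F}}]=0$.

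For sufficiency, I would apply the homotopy extension property of the cofibration $S^k\hookrightarrow B^{k+1}$ to a null-homotopy of $\hat\rho_{\mathcal{F}}$ to produce a genuine extension $\tilde\rho:B^{k+1}\to|\mathcal{K}(V,r)|$ with $\tilde\rho|_{S^k}=\hat\rho_{\mathcal{F}}$, and then define
\[
G_i=\tilde\rho^{-1}\bigl(\operatorname{st}(v_i)\bigr),
\]
where $\operatorname{st}(v_i)$ is the open star of $v_i$ in $|\mathcal{K}(V,r)|$. Each $G_i$ is open; the $G_i$ cover $B^{k+1}$ since open stars cover $|\mathcal{K}(V,r)|$; on $S^k$ we have $G_i\cap S^k=\{x:\phi_i(x)>0\}=F_i$, so $\mathcal{G}$ extends $\mathcal{F}$; and $\bigcap_{i\in S}G_i\neq\emptyset$ forces some $\tilde\rho(x)$ to lie in every $\operatorname{st}(v_i)$ for $i\in S$, making $S$ the vertex set of a face of a simplex of $\mathcal{K}(V,r)$ and hence not $r$-balanced.

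The main obstacle I anticipate is the boundary matching in the sufficiency direction: one must arrange $\tilde\rho|_{S^k}=\hat\rho_{\mathcal{F}}$ on the nose, not merely up to homotopy, so that $G_i\cap S^k=F_i$ holds literally rather than only up to a perturbation of the cover; this is precisely what the HEP provides. A secondary point is the closed-cover variant, which requires replacing open stars by closed stars and choosing closures of supports in the partition of unity accordingly, as well as keeping the identification $|\mathcal{K}(V,r)|\simeq S^{d-1}$ of Theorem~\ref{NonBalComp1} compatible with a fixed generator of $\pi_k(S^{d-1})$ (this is where the ``up to involution'' caveat from the abstract can enter).
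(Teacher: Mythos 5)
Your overall strategy is the same as the paper's: both arguments reduce the problem to maps into $|\mathcal{K}(V,r)|\simeq S^{d-1}$ and build the extended cover by pulling back open stars of vertices. Your sufficiency step is in fact cleaner: you extend $\hat\rho_{\mathcal F}:S^k\to|\mathcal K(V,r)|$ directly across $B^{k+1}$ (null-homotopic maps from $S^k$ extend to the ball), whereas the paper works on the cylinder $S^k\times[0,1]$, caps off the far end with a disk $Z$, and then takes a union over all subordinate partitions of unity to recover $\mathcal S$ on the boundary slice.

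There is, however, a genuine gap in the boundary matching as you wrote it, and it is not the one you flagged. You declare a partition of unity $\{\phi_i\}$ \emph{subordinate to} $\mathcal F$ with $\{\phi_i>0\}=F_i$; these two requirements are incompatible under the definition of subordination used in the paper, which demands $\mathrm{supp}(\phi_i)=\overline{\{\phi_i>0\}}\subset F_i$. If $\{\phi_i>0\}=F_i$ then $\overline{F_i}\subset F_i$, forcing each $F_i$ to be clopen, which fails for any proper open set of the connected space $S^k$. So no subordinate partition of unity gives $G_i\cap S^k=F_i$; with a genuinely subordinate $\Phi$ one always gets $G_i\cap S^k=\{\phi_i>0\}\subsetneq F_i$, and your $\mathcal G$ is not literally an extension of $\mathcal F$. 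The HEP is not where the difficulty sits; the difficulty is that the preimage of stars under a subordinate canonical map recovers only a shrinking of $\mathcal F$.

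The fix is small but must be made explicit. Drop the subordination requirement and take, say, $\phi_i(x)=d(x,S^k\setminus F_i)/\sum_j d(x,S^k\setminus F_j)$, which is continuous, nonnegative, sums to $1$, and has $\{\phi_i>0\}=F_i$ exactly (this works because $S^k$ is metrizable). Then $\{i:\phi_i(x)>0\}=\{i:x\in F_i\}$ is always a simplex of $N(\mathcal F)\subset\mathcal K(V,r)$, so $\hat\rho_{\mathcal F}$ lands in $|\mathcal K(V,r)|$. One must then check that this $\hat\rho_{\mathcal F}$ still represents $[\mathcal F_{(V,r)}]$, which by definition is computed from a subordinate partition of unity: the linear interpolation $(1-s)\Phi+s\Psi$ to a subordinate $\Psi$ has support set $\{i:\phi_i(x)>0\text{ or }\psi_i(x)>0\}\subset\{i:x\in F_i\}$ for every $s$, so it stays inside $|\mathcal K(V,r)|$ and the two maps are homotopic there. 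With this patch your construction goes through; $G_i\cap S^k=F_i$ on the nose and $\bigcap_{i\in S}G_i=\tilde\rho^{-1}\bigl(\bigcap_{i\in S}\mathrm{st}(v_i)\bigr)=\emptyset$ for every $r$-balanced $S$. For comparison, the paper avoids the issue by setting $W_i=\bigcup_{\Phi}U_i(\Phi,D)$ over all subordinate $\Phi$; this correctly recovers $S_i$ on the boundary slice, though one must then also coordinate the choice of null-homotopy $\tilde H_\Phi$ across the different $\Phi$ so that the unions do not accidentally create a balanced intersection in the interior of the cylinder.
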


and for the case of $n$-manifold with boundary:

\begin{ttheorem}
    \label{VExtMan}
    Let $M$ be an oriented manifold of dimension $d$ with boundary $N=\partial M$. Let $\mathcal{F}=\{F_1, \dots, F_m\}$ be a cover of $N$ such that there are no $r$-balanced simplices in $N(\mathcal{F})$ Then $\mathcal{F}$ can be extended to a cover $\mathcal{G}=\{G_1. \dots, G_n\}$ of the manifold $M$ such that there are no $r$-balanced simplices in $N(\mathcal{G})$ if and only if $\text{deg}(\mathcal{F}_{(V,r)})=0$.
\end{ttheorem}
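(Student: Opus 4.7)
The plan is to translate the extension problem for covers into a classical obstruction problem for maps into a sphere, using Theorem~\ref{NonBalComp1} to identify the target. First I would fix, using Theorem~\ref{NonBalComp1}, a deformation retraction $\conv(V)\setminus\{r\}\to |\mathcal{K}(V,r)|$ together with a homotopy equivalence $|\mathcal{K}(V,r)|\simeq S^{d-1}$. As will be detailed in Section~4, the canonical partition-of-unity map $\rho_{\mathcal{F},V}:N\to\conv(V)$ sends $x$ into $\conv(S)$, where $S$ is the simplex of $N(\mathcal{F})$ spanned by the sets containing $x$, so the hypothesis that no simplex of $N(\mathcal{F})$ is $r$-balanced is precisely the statement that $\rho_{\mathcal{F},V}$ avoids $r$. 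Composing with the retraction gives $\bar\rho_{\mathcal{F}}:N\to S^{d-1}$, whose degree coincides with $\deg(\mathcal{F}_{(V,r)})$ by construction.

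The necessity direction is then immediate: a cover $\mathcal{G}$ of $M$ that extends $\mathcal{F}$ and has non-balanced nerve yields, by the same construction, a map $\bar\rho_{\mathcal{G}}:M\to S^{d-1}$ extending $\bar\rho_{\mathcal{F}}$; since $M$ is a compact oriented $d$-manifold with $\partial M=N$, the identity $[N]=\partial[M,N]$ forces $\deg\bar\rho_{\mathcal{F}}=0$. For sufficiency, suppose $\deg\bar\rho_{\mathcal{F}}=0$. Because $\pi_i(S^{d-1})=0$ for $i<d-1$, the only possible obstruction to extending $\bar\rho_{\mathcal{F}}$ across $M$ lives in $H^{d}(M,N;\pi_{d-1}(S^{d-1}))\cong H^{d}(M,N;\mathbb{Z})$, which by Lefschetz duality equals $\mathbb{Z}$, and under this identification the obstruction class is $\deg\bar\rho_{\mathcal{F}}$. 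Hence the hypothesis produces an extension $f:M\to |\mathcal{K}(V,r)|$.

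The main obstacle is to promote $f$ into an honest cover $\mathcal{G}$ of $M$ that restricts on $N$ to $\mathcal{F}$ itself, not merely to an equivalent cover. Pulling back the open-star cover $\{\operatorname{star}(v_i)\}_{i=1}^m$ of $|\mathcal{K}(V,r)|$ gives a cover $\mathcal{G}_0=\{f^{-1}(\operatorname{star}(v_i))\}$ of $M$ whose nerve is contained in $\mathcal{K}(V,r)$, so automatically has no $r$-balanced simplex; the difficulty is that on $N$ this cover coincides with $\mathcal{F}$ only up to the $\bs$-equivalence used to define $[\mathcal{F}_{(V,r)}]$. To fix this I would select a collar $N\times[0,1]\subset M$, extend each $G_i$ through the collar by interpolating between $F_i$ on $N\times\{0\}$ and the pulled-back set on $N\times\{1\}$, and glue with $\mathcal{G}_0$ outside the collar. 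Checking that this interpolation preserves the non-balancedness of the nerve is the chief technical point; it follows the template of the analogous gluing in \cite{MusH}, now carried out with $|\mathcal{K}(V,r)|$ in place of $\partial\Delta^{n}$.
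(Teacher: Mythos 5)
Your necessity direction is fine and equivalent to the paper's appeal to the Hopf degree theorem, and your use of Theorem~\ref{NonBalComp1} together with the retraction of $P(V,r)$ to set up the map-extension problem matches the paper's setup. The trouble is in the sufficiency step, where you try to build the cover $\mathcal{G}$ from the extended map $f$.

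First, a terminology slip: you say the pulled-back cover $\mathcal{G}_0=\{f^{-1}(U_i(\mathcal{K}(V,r)))\}$ agrees with $\mathcal{F}$ on $N$ ``only up to the $\bs$-equivalence.'' That is not the right description: $\bs$-equivalence is a relation between pairs $(V,r)$, not between covers. What actually happens is that $\mathcal{G}_0|_N$ is a \emph{proper shrinking} of $\mathcal{F}$: for the partition of unity $\Phi$ used to build $f$, the $i$-th set of $\mathcal{G}_0|_N$ is $\{x\in N:\varphi_i(x)>0\}\subsetneq F_i$. Second, and more importantly, your proposed fix --- interpolating in a collar between $F_i$ on $N\times\{0\}$ and the pulled-back set on $N\times\{1\}$ --- is precisely the hard part, and you leave it unaddressed. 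There is no obvious one-parameter interpolation enlarging each $\{\varphi_i>0\}$ to $F_i$ while keeping the nerve free of $r$-balanced simplices; enlarging sets can only create new intersections, and nothing in your sketch prevents those from being balanced. The paper's proof (given in Theorem~\ref{VExtSph} and invoked verbatim for Theorem~\ref{VExtMan}) sidesteps this by a different device. It builds the cover on the collar cylinder $D\cong N\times[0,1]$ directly from a null-homotopy $\tilde H_\Phi$ of $\iota\circ c_\Phi$ inside $|\mathcal{K}(V,r)|$, and then sets
\[
W_i \;=\; \bigcup_{\Phi\in\Pi(\mathcal{F})}\tilde H_\Phi^{-1}\bigl(U_i(\mathcal{K}(V,r))\bigr),
\]
the union over \emph{all} partitions of unity $\Pi(\mathcal{F})$ subordinate to $\mathcal{F}$. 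Since every point of $F_i$ has $\varphi_i>0$ for some $\Phi$, the union recovers $F_i$ exactly on $N\times\{0\}$; on $N\times\{1\}$ the cover degenerates to a single set $W_1$, so $M$ minus the collar can simply be absorbed into $F_1$. Your vague interpolation step should be replaced by this union construction (or some equivalent mechanism that actually guarantees exact boundary agreement while controlling the nerve), and the appeal to ``the analogous gluing in \cite{MusH}'' does not by itself supply it, since that argument must first be generalized from $\partial\Delta^{m-1}$ to $|\mathcal{K}(V,r)|$ --- which is exactly what Theorem~\ref{VExtSph} does.
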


It turns out that in this general case, the proofs require significant additions and rely on Theorem \ref{NonBalComp1}.

\medskip

The 'balanced' definition of the homotopy class of a cover depends on the choice of the set of points $V=\{v_1, \dots, v_m\}$ and the point $r$ in $\mathbb{R}^d$. In Theorem \ref{IndTh}, we show that the homotopy class of the cover does not depend, modulo an automorphism of order 2 on the set $[T, S^{d-1}]$, on the choice of the pair $(V,r)$ from the class of balanced-equivalent pairs, which we consider as the main result of this work.

We also study covers of Euclidean space $\mathbb{R}^d$ by a finite number of sets and define the index of a balanced intersection, which resembles the index of a singular point in a vector field.

\medskip

This paper is organized as follows. In Section 2 we will present the formal definitions and results from the work \cite{MusH}. In Section 3 we explore properties of balanced subsets; in particular, Theorem \ref{NonBalComp1} is proven. In Section 4 we discuss the 'balanced' definition of a homotopy class of a cover and prove the main theorems of this work. In Section 5 we discuss covers of Euclidean space $\mathbb{R}^n$. In Section 6 we discuss the relation of this theory with the classical discrete fixed-point theorems like Sperner's lemma and the KKM lemma. 

\bigskip

\section{Homotopy classes of covers}

Let's start by considering colorings of simplicial complexes. Let $V(K)$ be the set of vertices of a simplicial complex $K$. A vertex coloring by $m$ colors is a mapping $L: Vert(K) \rightarrow [m]$.

Let $\Delta^{m-1}$ be an $m-1$ simplex with vertices $V(\Delta^{m-1}) = \{v_1, \dots, v_m\}$. Consider $$f_L: Vert(K) \rightarrow Vert(\Delta^{m-1}),$$ $$f_L(u)=v_{L(u)}$$ for $u \in Vert(K)$. It induces a simplicial mapping $f_L: |K| \rightarrow| \Delta^{m-1}|$

Note that if there are no simplices in $K$ colored in all $m$ colors, then $f_L: |K| \rightarrow S^{m-2}$, where $S^{m-2}=\partial|\Delta^{m-1}|$, is well defined.

\begin{definition}[Definition 1.1. from \cite{MusH}]
    Let $K$ be a simplicial complex and let $L: Vert(K) \rightarrow [m]$ be a coloring of its vertices such that $K$ has no simplices colored in all $m$ colors. Then we denote by $[L]$ the homotopy class $[f_L] \in [K, \mathbb{S}^{m-2}]$.
\end{definition}

\begin{example}
Let $K$ be a triangulation of the sphere $\mathbb{S}^{k}$, and let $L: Vert(K) \rightarrow [m]$ be such that $K$ has no simplices colored in all $m$ colors. Then $[L] \in \pi_k(\mathbb{S}^{m-1})$.

If $k = m - 2$, then $[L] = deg(f_L) \in \mathbb{Z}$, where $deg(f)$ is the degree of the mapping from the sphere to itself.
\end{example}

\begin{example}
\label{Hopf}
    Let $\mathbb{S}^{3}_{12}$ be a 3-dimensional sphere with a triangulation consisting of 12 vertices, and let $\mathbb{S}^{2}_{4}$ be a 2-dimensional sphere with a triangulation consisting of 4 vertices $\{v_1, v_2, v_3, v_4\}$ (a tetrahedron). 
    In the work \cite{MadSar}, a simplicial map $\tau_{r}: \mathbb{S}^{3}_{12} \rightarrow \mathbb{S}^{2}_{4}$ was considered, such that $[\tau_{r}]=1$ in $ \pi_{3}(\mathbb{S}^{2})=\mathbb{Z}$. Then, for $u \in Vert(\mathbb{S}^{3}_{12})$, set $L_{\tau_{r}}(u) = i$ if $\tau_{r}(u) = v_i$. Thus, $[L_{\tau_{r}}] = 1 \in \pi_{3}(\mathbb{S}^{2})$.
\end{example}

In general, a coloring of a triangulation is a special case of a cover. For each vertex $u \in Vert(K)$, define its open star $St(u)$ as $|S| \setminus |B|$, where $S \subset K$ is the set of all simplices in $K$ that contain the vertex $u$, and $B$ is the set of all simplices in $K$ that do not contain the vertex $u$. Suppose we have a coloring $L: Vert(K) \rightarrow [m]$. Let ${W_l=\{u \in Vert(K) \mid  L(u)=l\}}$ be the set of the vertices of color $l$. Define an open set $U_l(K)$ as
$$
U_{l}(K) = \bigcup\limits_{u \in W_l} St(u).
$$ 
Thus, we obtain a cover $\mathcal{U}_L(K) = \{U_1(K), \dots, U_m(K)\}$ of $|K|$ by open sets.

\medskip

Now let's take a closer look at covers. In this work, we will only consider finite covers.

Let $T$ be a topological space, and suppose $\mathcal{F} = \{F_1, \dots, F_m\}$ is a family of open subsets of $T$ such that $\bigcup_{i=1}^mF_i=T$ i.e., $\mathcal{F}$ is an open cover. Let $N(\mathcal{F})$ be the nerve of the cover, i.e. a simplicial complex with vertices $\{1, \dots, m\}$ and simplices corresponding to intersections of sets, specifically, $\sigma \in N(\mathcal{F}) \iff \bigcap\limits_{i \in S} F_i \neq \emptyset$. We denote the geometric realization of the simplicial complex by $|N(\mathcal{F})|$. We will use $v_i$ to denote the vertex of the geometric realization corresponding to the vertex $i$.

A partition of unity $\Phi$ subordinate to the open cover $\mathcal{F}$ is a collection of non-negative functions $(\varphi_1, \dots, \varphi_m)$ such that, for any $i=1, \dots, m$, we have $\varphi_i : T \rightarrow \mathbb{R}_{\geq 0}$, $\mathrm{supp}(\varphi_i) \subset F_i$, and for any point $x \in T$, it holds that $\sum\limits_{i=1}^m \varphi_i(x) = 1$. We will only consider Hausdorff paracompact spaces $T$. For example, all manifolds and all simplicial complexes are Hausdorff paracompact and admit a partition of unity.

For each partition of unity $\Phi$, there is a continuous map
\[
c_{\Phi} : T \to |N(\mathcal{F})|, \qquad 
c_{\Phi}(p) = \sum_{i=1}^{m} \varphi_i(p) v_i.
\]
We call such a map \emph{canonical}. Any two canonical maps are homotopic. Indeed, if $\Phi$ and $\Psi$ are two partitions of unity subordinate to $\mathcal{F}$, then the homotopy between them is given by $t \Phi + (1-t)\Psi$.

Since $N(F)$ is a subcomplex of the simplex $\Delta^{m-1}$, we can consider the embedding $\alpha: N(\mathcal{F}) \rightarrow |\Delta^{m-1}|$. Suppose that the intersection of all sets is empty, $\bigcap_{i=1}^{m} F_i = \emptyset$. In other words, $N(\mathcal{F})$ does not contain an $(m-1)$-simplex. Then $\alpha: |N(\mathcal{F})| \rightarrow \partial|\Delta^{m-1}| \simeq \mathbb{S}^{m-2}$ is well-defined.

Let $|\Delta^{m-1}| = \conv(v_1, \dots, v_m)$. Let $\Phi$ be a partition of unity. Then we can define the map $\rho_{\mathcal{F},\Phi}: T \rightarrow |\Delta^{m-1}|$ as
$$\rho_{\mathcal{F},\Phi}(x) = \sum\limits_{i=1}^{m} \varphi_i(x)v_i.$$
Note that $\rho_{\mathcal{F},\Phi} = \alpha \circ c_{\Phi}$.

We now can give the following definition.

\begin{definition}[Definition 1.2 from \cite{MusH}]
\label{HomCov}
    Let $T$ be a topological space and ${\mathcal{F} = \{F_1, \dots, F_m\}}$ be an open cover such that $\bigcap_{i=1}^{m} F_i = \emptyset$. Then we denote by  $[\mathcal{F}]$ the homotopy class of the map $[\rho_{\mathcal{F},\Phi}] = [\alpha \circ c_{\Phi}]$ in the set $[T, \mathbb{S}^{m-2}]$.
\end{definition}
Since any two canonical maps are homotopic, the definition is correct and does not depend on the choice of the partition of unity $\Phi$.

\begin{remark}
    Verification that $[\mathcal{U}_L(K)] = [L]$ is straightforward.
\end{remark}

So far, we have formulated definitions only for open covers. Let us show that everything works similarly for closed covers as well.

Let $\mathcal{C} = \{C_1, \dots, C_m\}$ be a closed cover of the space $T$. We say that an open cover $\mathcal{U} = \{U_1, \dots, U_m\}$ contains $\mathcal{C}$ if $C_i \subset U_i$ for all $i = 1, \dots, m$.

\begin{lemma}[Lemma 1.4 from \cite{MusH}]
\label{ClosedLemma}
 Let $\mathcal{C} = \{C_1, \dots, C_m\}$ be a closed cover of a Hausdorff paracompact topological space $T$. Then there exists an open cover $\mathcal{U}$ of $T$ such that $\mathcal{C} \subset \mathcal{U}$ (meaning $C_i \subset U_i$ for all $i$) and $N(\mathcal{U})$ is isomorphic to $N(\mathcal{C})$.

 Moreover, if $\mathcal{U}^1$ and $\mathcal{U}^2$ are two open covers containing $\mathcal{C}$ whose nerves $N(\mathcal{U}^1)$ and $N(\mathcal{U}^2)$ are both isomorphic to $N(\mathcal{C})$, then $[\mathcal{U}^1] = [\mathcal{U}^2]$ in $[T, \mathbb{S}^{m-2}]$.
\end{lemma}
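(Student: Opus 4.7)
\textbf{Part 1 (existence of $\mathcal{U}$).} Since $T$ is Hausdorff and paracompact it is normal, and the plan is to combine normality with a finite combinatorial construction indexed by the missing simplices of $N(\mathcal{C})$. The technical input I would isolate first is the following sublemma: whenever $D_1, \dots, D_k$ are closed subsets of a normal space with $\bigcap_j D_j = \emptyset$, one can find open $W_j \supseteq D_j$ with $\bigcap_j W_j = \emptyset$. This goes by induction on $k$, the base case $k=2$ being the definition of normality. For the inductive step, apply the hypothesis to $D_1 \cap D_2, D_3, \dots, D_k$ to obtain open sets $V \supseteq D_1 \cap D_2$ and $W_j \supseteq D_j$ ($j \geq 3$) with $V \cap W_3 \cap \dots \cap W_k = \emptyset$; then $D_1 \setminus V$ and $D_2 \setminus V$ are disjoint closed sets (their intersection would lie in $D_1 \cap D_2 \subseteq V$), so separate them by disjoint opens $O_1, O_2$ and set $W_1 := O_1 \cup V$, $W_2 := O_2$. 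A short verification gives $\bigcap_{j=1}^{k} W_j = \emptyset$.

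Using this, for each $S \subseteq [m]$ with $\bigcap_{i \in S} C_i = \emptyset$ I would choose open $W_i^S \supseteq C_i$ ($i \in S$) with $\bigcap_{i \in S} W_i^S = \emptyset$, and then set
\[
U_i \;=\; \bigcap_{\substack{S \ni i \\ \bigcap_{j \in S} C_j = \emptyset}} W_i^S.
\]
The indexing family is finite, so each $U_i$ is open and contains $C_i$; hence $\mathcal{U} = \{U_1, \dots, U_m\}$ is an open cover of $T$ containing $\mathcal{C}$. The sandwich $\bigcap_{i \in S} C_i \subseteq \bigcap_{i \in S} U_i \subseteq \bigcap_{i \in S} W_i^S$ shows immediately that $\bigcap_{i \in S} U_i = \emptyset$ iff $\bigcap_{i \in S} C_i = \emptyset$, so $N(\mathcal{U}) = N(\mathcal{C})$.

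\textbf{Part 2 (independence).} Given two open covers $\mathcal{U}^1, \mathcal{U}^2$ of $T$ with $C_i \subseteq U_i^j$ and $N(\mathcal{U}^j) = N(\mathcal{C})$, I would form the common refinement $U_i^0 := U_i^1 \cap U_i^2$. Each $U_i^0$ is open and contains $C_i$, so $\mathcal{U}^0 = \{U_i^0\}$ is an open cover of $T$; the inclusion $\bigcap_{i \in S} C_i \subseteq \bigcap_{i \in S} U_i^0 \subseteq \bigcap_{i \in S} U_i^1$ gives $N(\mathcal{U}^0) = N(\mathcal{C})$ just as in Part~1. The key observation is that any partition of unity $\Phi$ subordinate to $\mathcal{U}^0$ is simultaneously subordinate to both $\mathcal{U}^1$ and $\mathcal{U}^2$, since $\mathrm{supp}(\varphi_i) \subseteq U_i^0 \subseteq U_i^j$. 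Hence the canonical map $c_\Phi : T \to |N(\mathcal{C})|$ produced from $\Phi$ for $\mathcal{U}^0$, $\mathcal{U}^1$, and $\mathcal{U}^2$ is literally the same map, and post-composing with the common embedding $\alpha : |N(\mathcal{C})| \hookrightarrow \partial \Delta^{m-1} = \mathbb{S}^{m-2}$ yields $[\mathcal{U}^1] = [\mathcal{U}^0] = [\mathcal{U}^2]$ in $[T, \mathbb{S}^{m-2}]$.

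The main obstacle is the auxiliary normality sublemma in Part~1: it is standard but requires the careful inductive bookkeeping above to guarantee that enlarging closed sets into open neighborhoods does not accidentally reintroduce a common point. Once that is in hand, both halves of the lemma reduce to elementary set-theoretic manipulations exploiting the finiteness of $[m]$ and the naturality of the canonical map under shrinking a cover without changing its nerve.
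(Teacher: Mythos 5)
Your Part~2 matches the paper's argument (common refinement $U_i^1 \cap U_i^2$, then observe a single partition of unity is subordinate to all three covers). Your Part~1, however, takes a genuinely different and more elementary route. The paper inducts on $\dim N(\mathcal{C})$: it collects the maximal simplices $\mathcal{J}_{\max}$, separates the pairwise-disjoint sets $\{C_J\}_{J\in\mathcal{J}_{\max}}$ by disjoint opens $W_J$, shrinks each $C_i$ by removing $\bigcup_J W_J$ to drop the nerve dimension, applies the inductive hypothesis, and then reassembles. You instead isolate a single finitary normality sublemma --- closed sets $D_1,\dots,D_k$ with $\bigcap_j D_j=\emptyset$ admit open enlargements $W_j\supseteq D_j$ with $\bigcap_j W_j=\emptyset$ --- and then intersect the resulting open sets over all non-simplices $S$ of $N(\mathcal{C})$. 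This is cleaner: it replaces the nested topological induction by one set-theoretic sublemma plus a purely combinatorial intersection, and it makes the nerve-preservation manifest via the sandwich $\bigcap_{i\in S}C_i\subseteq\bigcap_{i\in S}U_i\subseteq\bigcap_{i\in S}W_i^S$.

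There is one small but genuine bug in your sublemma proof: you set $W_1:=O_1\cup V$ but $W_2:=O_2$. With that definition $W_2$ need not contain $D_2$, since $O_2$ only covers $D_2\setminus V$ and the points of $D_2\cap V$ may be missed. The fix is to take $W_2:=O_2\cup V$ as well; then $D_2=(D_2\setminus V)\cup(D_2\cap V)\subseteq O_2\cup V$, and
\[
W_1\cap W_2=(O_1\cup V)\cap(O_2\cup V)=V
\]
because $O_1\cap O_2=\emptyset$, so $W_1\cap W_2\cap W_3\cap\dots\cap W_k=V\cap W_3\cap\dots\cap W_k=\emptyset$ as required. With that correction your proof is complete and correct.
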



\begin{proof}
For any index set $J \subset \{1, \dots, m\}$, let $C_J := \bigcap_{j \in J} C_j$. Since $T$ is Hausdorff and paracompact, it is also a normal space. Recall that in a normal space, any two disjoint closed sets can be separated by disjoint open sets.

The proof of the first statement proceeds by induction on the dimension of the nerve, $k = \dim(N(\mathcal{C}))$.

{Base case}: if $k=0$, the sets $C_1, \dots, C_m$ are pairwise disjoint. Since $T$ is normal, we can find pairwise disjoint open sets $U_1, \dots, U_m$ such that $C_i \subset U_i$ for each $i$. The resulting open cover $\mathcal{U} = \{U_1, \dots, U_m\}$ has a nerve $N(\mathcal{U})$ that is also zero-dimensional and isomorphic to $N(\mathcal{C})$.

{Inductive Step}: assume the statement holds for all closed covers whose nerves have a dimension strictly less than $k > 0$. Let $\mathcal{J}_{max}$ be the set of all maximal simplices in $N(\mathcal{C})$. An immediate consequence of maximality is that for any two distinct maximal simplices $I, J \in \mathcal{J}_{max}$, the intersection $C_I \cap C_J$ must be empty. If it were non-empty, then $I \cup J$ would form a simplex containing both $I$ and $J$ as proper faces, contradicting their maximality. 

Therefore, the collection of closed sets $\{C_J\}_{J \in \mathcal{J}_{max}}$ is pairwise disjoint. By the normality of $T$, we can find a collection of pairwise disjoint open sets $\{W_J\}_{J \in \mathcal{J}_{max}}$ such that $C_J \subset W_J$ for each $J \in \mathcal{J}_{max}$. Furthermore, for each $i$ such that $C_i \cap C_J=\emptyset$, we can choose $W_J$ to be disjoint from an open set $\hat{U_i}$ containing $C_i$. 

Now, define a new family of sets $\mathcal{C}' = \{C'_1, \dots, C'_m\}$ by setting
$$C'_i := C_i \setminus \left( \bigcup_{J \in \mathcal{J}_{max}} W_J \right).$$
Each $C'_i$ is a closed set. For any maximal simplex $J \in \mathcal{J}_{max}$, the corresponding intersection $C'_J = \bigcap_{i \in J} C'_i$ is empty. This means no maximal simplex of $N(\mathcal{C})$ is present in $N(\mathcal{C}')$, so $\dim(N(\mathcal{C}')) < \dim(N(\mathcal{C}))$.

By the induction hypothesis, there exists an open cover $\mathcal{U}' = \{U'_1, \dots, U'_m\}$ such that $C'_i \subset U'_i$ for all $i$ and $N(\mathcal{U}') \cong N(\mathcal{C}')$.

Next, we must adjust the sets from $\mathcal{U}'$. For a given $i$, consider the intersection $\left(U_i' \cap \hat{U_i}\right)$. By a slight abuse of notation, we continue to call this refined set $U'_i$. This adjustment guarantees that for any maximal simplex $J$, we have $U'_i \cap W_J = \emptyset$ if $i \notin J$. And clearly $C_i' \subset U_i'$.

Finally, we construct our desired open cover $\mathcal{U}=\{U_1, \dots, U_m\}$ by defining
$$U_i := U'_i \cup \left( \bigcup_{J \in \mathcal{J}_{max}, \, i \in J} W_J \right).$$
By construction, $C_i \subset U_i$ for all $i$, and the nerve $N(\mathcal{U})$ is isomorphic to $N(\mathcal{C})$. This completes the induction.

\smallskip

We now prove the second part of the lemma. Suppose there exist two open covers $\mathcal{U}^1 = \{U^1_1, \dots, U^1_m\}$ and $\mathcal{U}^2 = \{U^2_1, \dots, U^2_m\}$ such that $\mathcal{C} \subset \mathcal{U}^1$, $\mathcal{C} \subset \mathcal{U}^2$, and $N(\mathcal{U}^1) \cong N(\mathcal{U}^2) \cong N(\mathcal{C})$.

Consider the cover $\mathcal{U}' = \{U'_1, \dots, U'_m\}$ where $U'_i := U^1_i \cap U^2_i$. This is an open cover containing $\mathcal{C}$ whose nerve $N(\mathcal{U}')$ is isomorphic to $N(\mathcal{C})$. The cover $\mathcal{U}'$ is a refinement of both $\mathcal{U}^1$ and $\mathcal{U}^2$ and $\mathcal{U'}$ contains $\mathcal{C}$. Hence their nerves are isomorphic and they define the same homotopy class: $[\mathcal{U}^1] = [\mathcal{U}']=[\mathcal{U}^2]$. 
\end{proof}

This lemma shows that we can define the homotopy class $[\mathcal{C}]$ of the closed cover $\mathcal{C}$ as the class $[\mathcal{U}]$, where $\mathcal{U}$ contains $\mathcal{C}$ and the nerve $N(\mathcal{U}) $ is isomorphic to the nerve $N(\mathcal{C})$.

In \cite{MusH} the following theorem was proven.

\begin{theorem}[Theorem 1.1 from  \cite{MusH}]
\label{Exist}
    Let $T$ be a topological space, and let $h \in [T, \mathbb{S}^{m-2}]$. Then there exists an open cover $\mathcal{F} = \{F_1, \dots, F_m\}$ such that $[\mathcal{F}] = h$.

    If $T$ is a simplicial complex, then there exists a triangulation $K$ of $T$ and a coloring $L : \mathrm{Vert}(K) \to [m]$ such that $[L] = h$.
\end{theorem}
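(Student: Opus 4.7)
The plan is to construct the cover directly by pulling back the canonical star cover of $\partial\Delta^{m-1}$. Choose a continuous representative $f : T \to \mathbb{S}^{m-2}$ of the homotopy class $h$, and identify $\mathbb{S}^{m-2}$ with $\partial\Delta^{m-1}$, the boundary of the standard simplex with vertices $v_1, \dots, v_m$. For each $i$, let $\psi_i : \partial\Delta^{m-1} \to [0,1]$ be the $i$th barycentric coordinate function, and let $U_i := \{y \in \partial\Delta^{m-1} : \psi_i(y) > 0\}$ be the open star of $v_i$. Define
$$F_i := f^{-1}(U_i), \qquad i = 1, \dots, m.$$
Every point of $\partial\Delta^{m-1}$ has at least one positive barycentric coordinate, so the $U_i$ cover $\partial\Delta^{m-1}$, and hence $\mathcal{F} := \{F_1, \dots, F_m\}$ is an open cover of $T$. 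No point of $\partial\Delta^{m-1}$ has all coordinates positive, so $\bigcap_i U_i = \emptyset$, which forces $\bigcap_i F_i = \emptyset$ as required.

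The key observation is that the pulled-back barycentric functions themselves provide a partition of unity. Set $\varphi_i := \psi_i \circ f$; this is a partition of unity subordinate to $\mathcal{F}$, and the associated canonical map is
$$\rho_{\mathcal{F},\Phi}(x) = \sum_{i=1}^{m} \varphi_i(x)\, v_i = \sum_{i=1}^{m} \psi_i(f(x))\, v_i = f(x),$$
the last equality being the defining property of barycentric coordinates. By Definition~\ref{HomCov}, $[\mathcal{F}] = [\rho_{\mathcal{F},\Phi}] = [f] = h$, which proves the first statement.

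For the second statement, assume $T = |K_0|$ for some triangulation $K_0$. Apply the simplicial approximation theorem to $f : |K_0| \to \partial\Delta^{m-1}$: after passing to a sufficiently fine iterated barycentric subdivision $K$ of $K_0$, there is a simplicial map $g : K \to \partial\Delta^{m-1}$ homotopic to $f$. A simplicial map to $\partial\Delta^{m-1}$ is by definition a vertex assignment $L : \mathrm{Vert}(K) \to \{v_1, \dots, v_m\} \cong [m]$ such that the image of each simplex of $K$ is a simplex of $\partial\Delta^{m-1}$; since the only simplex of $\Delta^{m-1}$ not in its boundary is the top one, this condition says precisely that no simplex of $K$ receives all $m$ colours. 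By construction $f_L = g$, so $[L] = [f_L] = [g] = [f] = h$.

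The argument is essentially formal and no step is a genuine obstacle: simplicial approximation is standard, and the first part works on the nose (not merely up to homotopy) precisely because the pulled-back barycentric coordinates happen to be a valid partition of unity. The only point worth flagging is that the nerve $N(\mathcal{F})$ need not equal $\partial\Delta^{m-1}$ as a subcomplex --- some faces can drop out --- but this is harmless, since $\rho_{\mathcal{F},\Phi} = \alpha \circ c_\Phi$ still lands in $\partial\Delta^{m-1}$ and represents the same class.
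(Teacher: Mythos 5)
Your proof is correct and takes essentially the same route as the paper's proof of the generalization, Theorem~\ref{BalExist}: choose a representative $f$ of $h$, pull back the open star cover of the target along $f$, take the pulled-back coordinate functions as the subordinate partition, and observe that the resulting canonical map recovers $f$ on the nose. The only minor wrinkle worth flagging is that $\mathrm{supp}(\psi_i \circ f) = \overline{F_i}$ is not in general contained in the open set $F_i$, so under the closed-support convention the functions $\psi_i \circ f$ are not literally a partition of unity subordinate to $\mathcal{F}$; but they do satisfy $\{\psi_i \circ f > 0\} \subset F_i$, which is all that is needed for the canonical map to land in $|N(\mathcal{F})|$, and a linear homotopy from $\Phi=(\psi_i\circ f)_i$ to a genuine subordinate partition of unity $\Phi'$ stays inside $|N(\mathcal{F})|\subset\partial\Delta^{m-1}$, so $[\mathcal{F}]=[\rho_{\mathcal{F},\Phi'}]=[\rho_{\mathcal{F},\Phi}]=[f]=h$ as claimed.
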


In the paper \cite{MusH}, theorems on cover extension are also proved. In Section 3, we will present proofs of generalized versions of these theorems.

\begin{definition}[Definition 2.1 from \cite{MusH}]
\label{ExtDef}
    Let $A$ be a subspace of a topological space $X$. Suppose $\mathcal{S} = \{S_1, \dots, S_m\}$ is an open (or closed) cover of $A$, and $\mathcal{F} = \{F_1, \dots, F_m\}$ is an open (or, respectively, closed) cover of $X$. We say that $\mathcal{F}$ is an \textit{extension} of $\mathcal{S}$ if $S_i = F_i \cap A$ for all $i$.
\end{definition}

In the case $A = S^{k}$ and $X = B^{k+1}$, the following theorem holds.

\begin{theorem}[Theorem 2.1 from  \cite{MusH}]
\label{ExtSph}
    Let $\mathcal{S} = \{S_1, \dots, S_m\}$ be a cover of $\mathbb{S}^k$ such that the intersection of all $S_i$ is empty. Then $\mathcal{S}$ can be extended to a cover $\mathcal{F}$ of the ball $\mathbb{B}^{k+1}$, such that the intersection of all $F_i$ is empty if and only if $[\mathcal{S}] = 0$ in $\pi_k(\mathbb{S}^{m-2})$.
\end{theorem}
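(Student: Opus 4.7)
The plan is to prove both implications by translating between covers and canonical maps into $\mathbb{S}^{m-2}$, viewed as $\partial|\Delta^{m-1}|$. For the forward direction, I would assume an extension $\mathcal{F} = \{F_1, \dots, F_m\}$ of $\mathcal{S}$ to $\mathbb{B}^{k+1}$ with $\bigcap_i F_i = \emptyset$ exists, pick any partition of unity $\Phi$ subordinate to $\mathcal{F}$, and consider $\rho_{\mathcal{F}, \Phi} : \mathbb{B}^{k+1} \to \partial|\Delta^{m-1}| \cong \mathbb{S}^{m-2}$. Restricted to $\mathbb{S}^k = \partial\mathbb{B}^{k+1}$, the family $\Phi|_{\mathbb{S}^k}$ is a partition of unity subordinate to $\mathcal{S}$, since $\mathrm{supp}(\varphi_i) \cap \mathbb{S}^k \subset F_i \cap \mathbb{S}^k = S_i$. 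Hence $\rho_{\mathcal{F}, \Phi}|_{\mathbb{S}^k}$ is a canonical map for $\mathcal{S}$ and represents $[\mathcal{S}]$. Because $\mathbb{B}^{k+1}$ is contractible, any map from it to $\mathbb{S}^{m-2}$ is nullhomotopic, so the restriction to $\mathbb{S}^k$ is nullhomotopic as well, yielding $[\mathcal{S}] = 0$.

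For the reverse direction, I would begin by choosing a partition of unity $\Phi = (\varphi_1, \dots, \varphi_m)$ subordinate to $\mathcal{S}$ with the extra property that $\{x : \varphi_i(x) > 0\} = S_i$ for every $i$. This can be achieved on the metric space $\mathbb{S}^k$ by setting $\psi_i(x) := d(x, \mathbb{S}^k \setminus S_i)$ and defining $\varphi_i := \psi_i / \sum_j \psi_j$; the denominator is strictly positive because $\bigcup_i S_i = \mathbb{S}^k$. With this choice, the preimage under $\rho_{\mathcal{S}, \Phi}$ of the open star $St(v_i) \subset \partial|\Delta^{m-1}|$ equals exactly $S_i$. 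The hypothesis $[\mathcal{S}] = 0$ then says $\rho_{\mathcal{S}, \Phi}$ is nullhomotopic, so by the standard extension property for the pair $(\mathbb{B}^{k+1}, \mathbb{S}^k)$ it extends to a continuous $\tilde{\rho} : \mathbb{B}^{k+1} \to \mathbb{S}^{m-2}$.

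I would then define $F_i := \tilde{\rho}^{-1}(St(v_i))$. These are open in $\mathbb{B}^{k+1}$; their union equals $\mathbb{B}^{k+1}$ because the open stars cover $\partial|\Delta^{m-1}|$; their total intersection is empty because no point of $\partial|\Delta^{m-1}|$ has all barycentric coordinates positive; and $F_i \cap \mathbb{S}^k = S_i$ by the choice of $\Phi$. Thus $\mathcal{F} = \{F_1, \dots, F_m\}$ is the desired extension. The closed cover version reduces to this open case via Lemma \ref{ClosedLemma}: pass to an open cover with the same nerve, apply the open-case result, then intersect the extended open sets with suitable closed neighborhoods.

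The main obstacle is arranging the partition of unity in the reverse direction so that $\{\varphi_i > 0\}$ coincides exactly with $S_i$, rather than merely lying inside it. Without this precise identification, the preimages $\tilde{\rho}^{-1}(St(v_i))$ need not restrict to the given $S_i$ on $\mathbb{S}^k$, and the constructed $\mathcal{F}$ would fail to be an extension in the sense of Definition \ref{ExtDef}. Once this metric-based refinement of the partition of unity is in place, the rest of the argument is essentially forced by the simplicial identification $\mathbb{S}^{m-2} = \partial|\Delta^{m-1}|$ and the standard homotopy extension property of the ball.
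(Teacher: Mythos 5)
Your argument is correct, and the forward implication matches the paper's exactly: a nullhomotopy comes for free from restricting a canonical map on the contractible ball. But the reverse implication takes a genuinely different route. The paper (in its proof of the generalized Theorem~\ref{VExtSph}, which it states ``largely repeats'' the proof of the present theorem) fixes an arbitrary partition of unity $\Phi$, homotopes $\iota\circ c_\Phi$ to a constant vertex map over the cylinder $\mathbb{S}^k\times[0,1]$, pulls back the open-star cover of the nerve complex, and then confronts the fact that on $\mathbb{S}^k\times\{0\}$ the resulting sets are only $\{\varphi_i>0\}\subsetneq S_i$; it repairs this by unioning the pulled-back sets over \emph{all} subordinate partitions of unity, and finally caps off $\mathbb{S}^k\times\{1\}$ by a disk lying in a single set. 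You instead pick, once and for all, a partition of unity on the metric space $\mathbb{S}^k$ with $\{\varphi_i>0\}=S_i$ exactly (using $d(\cdot,\mathbb{S}^k\setminus S_i)$), extend the nullhomotopic canonical map $\rho_{\mathcal{S},\Phi}$ directly over $\mathbb{B}^{k+1}$, and pull back the open stars of $\partial|\Delta^{m-1}|$. This sidesteps both the union over all partitions and the cylinder-plus-cap step and is cleaner in this ungeneralized setting. One technicality to make explicit: with the paper's definition a subordinate partition of unity requires $\mathrm{supp}(\varphi_i)\subset S_i$, whereas your construction gives $\mathrm{supp}(\varphi_i)=\overline{S_i}$, which can exceed $S_i$. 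What you actually need is the weaker property $\{\varphi_i>0\}\subset S_i$, which your $\Phi$ satisfies with equality; this is enough for $\rho_{\mathcal{S},\Phi}$ to land in $\partial|\Delta^{m-1}|$ and to be linearly homotopic (through maps avoiding the barycenter) to any genuine canonical map, hence to represent $[\mathcal{S}]$. Both your proposal and the paper treat the closed-cover case tersely by deferring to Lemma~\ref{ClosedLemma}.
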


Now, let us consider the case where $A$ is the boundary of a manifold $M$.

\begin{definition}[Definition 2.2 from \cite{MusH}]
    Let $N$ be an $(n-2)$-dimensional manifold without boundary, and suppose $\mathcal{S} = \{S_1, \dots, S_n\}$ is a cover of $N$. If the intersection of all $S_i$ is empty, then $[\mathcal{S}] \in \mathbb{Z} = [N, \mathbb{S}^{n-2}]$. The value $[\mathcal{S}]$ is called the \textit{degree} of $\mathcal{S}$, denoted as $\deg(\mathcal{S})$.
\end{definition}

\begin{theorem}[Theorem 2.2 from  \cite{MusH}]
\label{ExtMan}
    Let $M$ be an $(n-1)$-dimensional manifold, and let $N = \partial M$. Suppose $\mathcal{F} = \{S_1, \dots, S_n\}$ is a cover of $N$ such that the intersection of all $S_i$ is empty. Then $\mathcal{F}$ can be extended to a cover $\mathcal{F} = \{F_1, \dots, F_n\}$ of the manifold $M$, such that the intersection of all $F_i$ is empty if and only if $\deg(\mathcal{S}) = 0$.
\end{theorem}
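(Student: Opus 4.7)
The plan is to translate the cover-extension problem into the map-extension problem $N \to S^{n-2}$ and then invoke Hopf's degree theorem together with obstruction theory. The bridge is the canonical map $\rho_{\mathcal{S}}: N \to |N(\mathcal{S})| \hookrightarrow \partial\Delta^{n-1} = S^{n-2}$ produced from any partition of unity subordinate to $\mathcal{S}$. Necessity is then immediate: if $\mathcal{F} = \{F_1, \dots, F_n\}$ extends $\mathcal{S}$ with $\bigcap_i F_i = \emptyset$, a partition of unity subordinate to $\mathcal{F}$ restricts to one subordinate to $\mathcal{S}$, so $\rho_{\mathcal{F}}: M \to S^{n-2}$ extends $\rho_{\mathcal{S}}$. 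Since $M$ is oriented and $N=\partial M$, the fundamental class satisfies $i_{\ast}[N]=0$ in $H_{n-2}(M)$, where $i : N \hookrightarrow M$ is the inclusion, and hence $\deg(\mathcal{S}) = (\rho_{\mathcal{F}})_{\ast}\, i_{\ast}[N] = 0$.

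For sufficiency, assume $\deg(\mathcal{S}) = 0$. The primary obstruction to extending $\rho_{\mathcal{S}}$ over $M$ lives in $H^{n-1}(M,N;\pi_{n-2}(S^{n-2})) = H^{n-1}(M,N;\mathbb{Z})$, and by Lefschetz duality (using orientability of $M$) this group is isomorphic to $H_0(M;\mathbb{Z})$, with the obstruction on each component of $M$ equal to the degree of $\rho_{\mathcal{S}}$ on the contributing part of the boundary. Assuming $M$ connected (otherwise argue componentwise, using $[N]=0$ in $H_{n-2}(M)$ to coordinate signs), the obstruction vanishes, and $\rho_{\mathcal{S}}$ extends to some continuous $g: M \to S^{n-2}$. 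To produce an actual cover, pull back the open stars of the vertices of $\partial\Delta^{n-1}$ through $g$: the sets $U_i := g^{-1}(\mathrm{St}(v_i))$ form an open cover of $M$ with empty common intersection. Finally, using a collar $c : N \times [0,1] \hookrightarrow M$, modify each $U_i$ on $c(N \times [0,\epsilon])$ so that it agrees with $S_i$ on $N$, by interpolating continuously between $c(S_i \times [0,\epsilon/2])$ near the boundary and $U_i$ away from the collar.

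The main obstacle is this last adjustment in the sufficiency direction. The pullback cover $\{g^{-1}(\mathrm{St}(v_i))\}$ has nerve isomorphic to that of $\mathcal{S}$ on $N$ but does not literally equal $\mathcal{S}$, and converting the map extension into an honest cover extension requires a careful collar-gluing that preserves both openness of each set and the global emptiness of the top intersection. A clean workaround is to pass to closed covers via Lemma \ref{ClosedLemma}: perform the collar modification in the closed setting, where the containments $S_i \subset F_i$ and the emptiness of $\bigcap_i F_i$ are simpler to maintain, and then thicken slightly to recover an open cover with the same nerve. Secondary issues include the orientability hypothesis needed for Lefschetz duality and the careful handling of multiple boundary components, which must be handled compatibly so that the individual null-homotopies glue along the collar.
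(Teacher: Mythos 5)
The paper does not reprove Theorem~\ref{ExtMan} itself --- it quotes it from \cite{MusH} and instead proves the balanced generalization, Theorem~\ref{VExtMan}, whose proof (together with that of Theorem~\ref{VExtSph}, to which it defers) is what your argument should be measured against. Your outline matches that proof: translate to a map-extension problem $N \to S^{n-2}$, use orientability to show that degree zero implies the map extends over $M$, then convert the map extension back into a cover extension via a collar. The paper phrases the extension step as Hopf's degree theorem (equivalently, degree zero on a closed oriented $(n-2)$-manifold implies null-homotopy into $S^{n-2}$, so the map extends over a collar and hence over $M$), while you use primary obstruction theory plus Lefschetz duality; these are the same fact, and both require $M$ oriented as in Theorem~\ref{VExtMan}.

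The genuine gap is in the conversion back to a cover extension, which you correctly flag as ``the main obstacle'' but do not resolve. The pullback cover $\{g^{-1}(\mathrm{St}(v_i))\}$ restricts on $N$ to $\{x \in N : \varphi_i(x) > 0\}$, which is generally a \emph{proper} subset of $S_i$. Your proposal to ``interpolate continuously'' on a collar is not a construction: naively enlarging $g^{-1}(\mathrm{St}(v_i))$ near the boundary to absorb all of $S_i$ can create a nonempty total intersection, and Lemma~\ref{ClosedLemma} points the wrong way here (it produces open covers from closed ones with matching nerves, it does not glue a prescribed boundary cover onto a pullback cover). The paper resolves exactly this difficulty in the proof of Theorem~\ref{VExtSph}, which the proof of Theorem~\ref{VExtMan} invokes after choosing a collar $D \cong N \times [0,1]$ and setting $F_1 = W_1 \cup (M\setminus D)$: for each partition of unity $\Phi \in \Pi(\mathcal{S})$ one gets a cover $U_i(\Phi,D) = \tilde H_{\Phi}^{-1}(\mathrm{St}(v_i))$ of the collar whose trace on $N\times\{0\}$ is $\{\varphi_i > 0\}$, and one then takes $W_i = \bigcup_{\Phi\in\Pi(\mathcal{S})} U_i(\Phi,D)$. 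Since $\bigcup_{\Phi}\{\varphi_i>0\} = S_i$, the resulting $\mathcal{W}$ genuinely extends $\mathcal{S}$ while retaining an empty top intersection on the collar. Without this union-over-all-partitions device (or an equivalent mechanism), the sufficiency direction of your argument is incomplete.

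Two smaller points: your necessity argument via $i_*[N]=0 \in H_{n-2}(M)$ is fine and equivalent to the paper's Hopf-theorem phrasing; and your remark about multiple boundary components is apt --- the degree must vanish separately on each component of $M$, which is why the theorem is really stated for $M$ connected (or applied componentwise).
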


\section{Balanced subsets}

In this Section we explore some properties of balanced subsets of points and prove Theorem \ref{NonBalComp1}

Let $V=\{v_1, \dots, v_m\} \subset \mathbb{R}^{d}$ be a set of points in Euclidean space $\mathbb{R}^d$. Denote by $rk(V)$ the dimension of the polytope $\conv(V)$. The set $V$ is of full rank if $rk(V)=d$.

\begin{definition}
    
Suppose there is a point $r \in \conv(V)$. A set of vertices $S \subset V$ is called $r$-balanced if $r \in \conv(S)$.  Denote the family of all $r$-balanced subsets as $\bs(V,r)$. The corresponding set of indices is also called $r$-balanced. \end{definition}

To a family of points $V$ and to a point $r$ we will refer simply as a pair $(V,r)$ in $\mathbb{R}^d$. Denote by $[(V,r)]$ a family of all pairs $(V',r')$ that are  $\bs$-equivalent to $(V,r)$.

\medskip

Note that moving point $v_i$ along the ray from $r$ to $v_i$ does not change the family of balanced sets. This can be formalized and proved in the next obvious technical lemma. 

\begin{lemma}
  \label{conveq}  
 Let $(V,r)$ be a pair where $V =\{v_1, \dots, v_m\} \subset \mathbb{R}^d$ and $r \in \mathbb{R}^d$, let $\{\lambda_i\}_{i=1}^{m}$ be a set of positive numbers. Then $(V,r)$ is $\bs$-equivalent to $(V',r)$, where $V'=\{v_1', \dots, v_m'\}$ and $v_i'=\lambda_i(v_i-r)+r$ for $i=1, \dots, m$.
\end{lemma}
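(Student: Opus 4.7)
The plan is to unpack the definition of $r$-balancedness directly and verify the biconditional by an explicit rescaling of convex-combination coefficients. A subset $S \subset V$ lies in $\bs(V,r)$ precisely when there exist non-negative coefficients $\alpha_i$, indexed over $v_i \in S$, summing to $1$ and satisfying $\sum \alpha_i v_i = r$, or equivalently $\sum \alpha_i (v_i - r) = 0$. Since the construction of $V'$ gives $v_i' - r = \lambda_i (v_i - r)$, the analogous condition for the corresponding $S' \subset V'$ reads $\sum \beta_i \lambda_i (v_i - r) = 0$ with non-negative $\beta_i$ summing to $1$.

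The bulk of the proof is then a one-line change of variables. Given $\alpha_i$ witnessing that $S \in \bs(V,r)$, I would set
\[
\beta_i = \frac{\alpha_i / \lambda_i}{\sum_{j} \alpha_j / \lambda_j},
\]
noting that the denominator is strictly positive because the $\lambda_j$ are positive and the $\alpha_j$ are non-negative with sum $1$. These $\beta_i$ are non-negative and sum to $1$, and a direct substitution shows $\sum \beta_i \lambda_i (v_i - r)$ is a positive multiple of $\sum \alpha_i (v_i - r) = 0$, hence vanishes. Thus $S' \in \bs(V',r)$. The converse direction is symmetric: given $\beta_i$, one takes $\alpha_i = \beta_i \lambda_i / \sum_j \beta_j \lambda_j$ and verifies the analogous identity.

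There is no real obstacle here; the only subtlety is making sure the denominators in the rescaling are nonzero, which is immediate from the positivity of the $\lambda_i$ and the fact that the coefficient vectors are probability vectors. Since the correspondence $S \mapsto S'$ is just the bijection $v_i \leftrightarrow v_i'$ on vertex labels, the above shows $\bs(V,r)$ and $\bs(V',r)$ coincide as collections of index sets, which is exactly the statement of $\bs$-equivalence of the pairs $(V,r)$ and $(V',r)$.
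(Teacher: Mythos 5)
Your proposal is correct and follows essentially the same argument as the paper: rescale the convex-combination coefficients by $1/\lambda_i$ and renormalize, exactly the substitution $c_j' = \frac{c_j}{\lambda_{i_j} c}$ with $c = \sum_j c_j/\lambda_{i_j}$ used in the paper's proof. The only (minor) difference is that you spell out the converse direction explicitly, which the paper leaves to symmetry.
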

\begin{proof}
    Suppose $S =\{v_{i_1}, \dots, v_{i_k}\} \in \bs(V)$ and $S'=\{v_{i_1}', \dots, v_{i_k}'\}$. Then we have $r=\sum \limits_{j=1}^{k}c_jv_{i_j}$ where $\sum \limits_{j=1}^{k}c_j=1$ and $c_j \geq 0$ for $j=1, \dots, k$. 
    
    It remains to show that $r \in \conv(S')$. Indeed, consider $c=\sum\limits_{j=1}^{k}\frac{c_j}{\lambda_{i_j}}$ and $c_j'=\frac{c_j}{\lambda_{i_j}c}$.  

   It is clear that $\sum \limits_{j=1}^{k}c_j'=1$, $c_j' \geq 0$, and $r=\sum \limits_{j=1}^{k}c_j'v_{i_j}'$
\end{proof}

Now, consider the transformation 
$$
\tau_{r}(x) =
\begin{cases} 
      \frac{x - r}{\|x - r\|} & \text{if } x \neq r, \\
      r & \text{if } x = r.
\end{cases}
$$  

From \ref{conveq} it follows that

\begin{lemma}
 \label{RemBall}
  $\tau_{r}$ does not change $r$-balanced sets. That is, if $S \subset V$ is a set of points, then $r \in \conv(S)$ if and only if $r \in \conv(\tau_{r}(S))$.
\end{lemma}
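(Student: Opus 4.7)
The plan is to reduce this lemma directly to Lemma \ref{conveq}, which asserts that $\bs$-equivalence is preserved under positive radial rescaling about $r$. The key observation is that for every $v_i \ne r$, the point $\tau_r(v_i)$ lies on the ray from $r$ through $v_i$ and is obtained by rescaling the vector $v_i - r$ by the positive factor $\lambda_i = 1/\|v_i - r\|$. In other words, $\tau_r(v_i) = \lambda_i(v_i - r) + r$ with $\lambda_i > 0$, which is exactly the form of the rescaling considered in Lemma \ref{conveq}.

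So my proof would proceed as follows. First I would read off the scalars $\lambda_i = 1/\|v_i - r\|$ and verify the identity $\tau_r(v_i) = \lambda_i(v_i-r)+r$ from the definition of $\tau_r$. Then I would apply Lemma \ref{conveq} with these $\lambda_i$ to the pair $(V,r)$, concluding that $(V,r)$ and $(\tau_r(V), r)$ are $\bs$-equivalent. Restricting this equivalence to the subfamily indexed by $S$ immediately yields the biconditional $r \in \conv(S) \iff r \in \conv(\tau_r(S))$.

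The only point one needs to treat with a sentence of care is the degenerate case $v_i = r$, but the second clause of the definition of $\tau_r$ sets $\tau_r(r) = r$, so such points are fixed and drop out of the argument: if $r \in S$ then both convex hulls contain $r$ trivially, and otherwise we are in the regime where Lemma \ref{conveq} applies directly. There is no substantive obstacle beyond this bookkeeping; the lemma is essentially a one-line specialization of Lemma \ref{conveq} to the particular choice $\lambda_i = 1/\|v_i - r\|$.
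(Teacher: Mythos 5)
Your reduction of Lemma~\ref{RemBall} to Lemma~\ref{conveq} via the choice $\lambda_i = 1/\|v_i - r\|$ is exactly the argument the paper intends: the paper states the lemma immediately after Lemma~\ref{conveq} with only the remark ``From~\ref{conveq} it follows that,'' leaving precisely this one-line specialization to the reader. Your proposal is correct and matches the paper's approach, with the degenerate case $v_i = r$ handled appropriately as a small bookkeeping point.
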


\medskip

Let us now examine in more detail the sets that are not balanced. For a pair $(V,r)$, consider the family of convex sets $\mathcal{P}(V,r)$, where $F \in \mathcal{P}(V,r)$ if and only if $F = \conv(S)$ for some $S \notin \bs(V,r)$. We denote their union by 
\[
P(V,r) = \bigcup \mathcal{P}(V,r).
\] 

We first establish the homotopy type of $P(V,r)$. 
\begin{lemma}
\label{PBallLemma}   Let $(V,r)$ be a pair in $\mathbb{R}^d$ of rank $k \leq d$. Then 
    \begin{enumerate}
  
        \item If  $r \in \mathrm{relint}(\conv(V))$, then  $P(V,r) \simeq S^{k-1}$ and $\tau_r: P(V,r) \rightarrow S^{k-1} \subset S^{d-1}$ is a homotopy equivalence.
        \item If $r \notin \mathrm{relint}(\conv(V))$, then $P(V,r)$ is contractible. 
    \end{enumerate}
\end{lemma}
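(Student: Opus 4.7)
The lemma splits into two cases based on whether $r$ lies in the relative interior of $\conv(V)$. In both cases I restrict to the affine hull of $V$, so I may assume $V$ has full rank $k$ and, after translating, $r=0$. For Case 1 ($r\in\mathrm{relint}(\conv(V))$), I construct an explicit homotopy equivalence between $P(V,r)$ and $S^{k-1}$ realized by the radial map $\tau_r$, with inverse $g\colon S^{k-1}\to P(V,r)$ sending $u\mapsto r+R(u)u$, where $R(u)$ is the first-hit distance from $r$ to $\partial\conv(V)$ along the ray in direction $u$. Each such exit point lies on a proper face of $\conv(V)$, whose vertex set is non-balanced since $r$ is interior, so $g$ does land in $P(V,r)$ and $\tau_r\circ g=\mathrm{id}_{S^{k-1}}$. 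I would verify $g\circ\tau_r\simeq\mathrm{id}_{P(V,r)}$ via the straight-line homotopy $H_t(x)=(1-t)x+tx^*$, which sweeps each point outward along its ray.

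The essential requirement is that $H_t(x)$ stay inside $P(V,r)$, which amounts to the \emph{radial outward property}: whenever $x\in P(V,r)$ and $y=r+\lambda(x-r)\in\conv(V)$ with $\lambda\geq 1$, one has $y\in P(V,r)$; equivalently, $\conv(V)\setminus P(V,r)$ is star-shaped from $r$. This is the main obstacle. My plan is to argue by contradiction: if $y\notin P(V,r)$, then every support of $y$ is balanced, so in particular a minimal support $T_y$ of $y$ (which is automatically affinely independent) satisfies $r\in\conv(T_y)$. Convexity then forces $[r,y]\subset\conv(T_y)$, so $x\in\conv(T_y)$. Writing $x=(1-1/\lambda)r+(1/\lambda)y$ with $\lambda\geq 1$ and reading barycentric coordinates inside $\conv(T_y)$, the coefficients of $x$ are strictly positive on all of $T_y$, because those of $y$ are strictly positive there and $1/\lambda>0$. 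In the generic situation where $x$'s minimal support in $V$ is unique, this forces $T_y$ to equal that minimal support, which $x\in P(V,r)$ requires to be non-balanced -- contradicting $r\in\conv(T_y)$. Non-generic configurations (multiple minimal supports) would be handled by perturbing $V$ slightly via an $\bs$-preserving transformation from Lemma \ref{conveq} and passing to a limit.

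For Case 2 ($r\notin\mathrm{relint}(\conv(V))$) the argument is more direct. If $r\notin\conv(V)$, then $V$ itself is non-balanced, so $\conv(V)\subseteq P(V,r)$ forces $P(V,r)=\conv(V)$, convex and contractible. Otherwise $r\in\partial\conv(V)$; let $H$ be a supporting hyperplane of $\conv(V)$ at $r$ with normal $w$, $\langle w,r\rangle=c$ and $\langle w,v\rangle\geq c$ for every $v\in V$. Full rank in the affine hull produces a vertex $v_j\in V$ with $\langle w,v_j\rangle>c$. I claim $P(V,r)$ is star-shaped from $v_j$: for any $x\in\conv(S)$ with $S$ non-balanced, the segment $[x,v_j]$ is contained in $\conv(S\cup\{v_j\})$, and $S\cup\{v_j\}$ remains non-balanced. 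Indeed, if $r=\sum_{v\in S}\beta_v v+\gamma v_j$ were a convex combination, applying $\langle w,\cdot\rangle$ would yield $c=\sum\beta_v\langle w,v\rangle+\gamma\langle w,v_j\rangle\geq c$, and equality would force $\gamma=0$ and $\langle w,v\rangle=c$ whenever $\beta_v>0$; then $r$ would be a convex combination of $\{v\in S:\langle w,v\rangle=c\}\subseteq S$, contradicting $r\notin\conv(S)$. Hence $P(V,r)$ is star-shaped from $v_j$, and therefore contractible.
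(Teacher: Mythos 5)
Your overall strategy matches the paper's: reduce to full rank with $r=0$, build a radial retraction of $P(V,r)$ onto the boundary sphere, and handle the degenerate case by showing $P(V,r)$ is a cone. Case~2 is done correctly, and in fact more carefully than the paper: your hyperplane computation with the supporting functional $w$ genuinely verifies that adjoining $v_j$ preserves non-balancedness, whereas the paper merely asserts it. The case $r\notin\conv(V)$ and the reduction to the affine hull are also fine.

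The problem is in Case~1, specifically in the proof of the \emph{radial outward property}, which is exactly the content of the paper's Claim~\ref{convexityclaim} (that $l(v)\cap P(V,0)$ is a single segment) and is the load-bearing step. Your argument takes a minimal, affinely independent support $T_y$ of $y$ with $r\in\conv(T_y)$, and correctly deduces that $x\in\mathrm{relint}(\conv(T_y))$, so $T_y$ is a minimal support of $x$ that is balanced. But $x\in P(V,r)$ only guarantees the \emph{existence} of at least one non-balanced support of $x$; it does not say anything about a particular minimal support such as $T_y$, and a point can perfectly well have several distinct minimal supports, some balanced and some not. You recognize this and retreat to the ``generic'' case of a unique minimal support, proposing to dispose of the remaining configurations by perturbing $V$ via Lemma~\ref{conveq} and taking a limit. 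This does not close the gap: Lemma~\ref{conveq} only permits scaling each $v_i$ radially from $r$, which is far too rigid to push a given $x$ into general position with respect to $V$ (it preserves which rays from $r$ pass through each $v_i$), and in any case the set $P(V',r)$ and its complement do not vary in a way that lets you transfer star-shapedness through a limit of perturbations without further work. So as written, the monotonicity claim is unproven in precisely the cases where it is not obvious. The paper instead proves the single-segment claim by a different mechanism: assume a gap on the ray, take a \emph{maximal} non-balanced set $S_1$ realizing the right endpoint of the first segment, separate $\conv(S_1)$ from the start of the second segment by a hyperplane through that endpoint, and then adjoin to $S_1$ a vertex of the second segment's witnessing set lying on the far side; this preserves non-balancedness and contradicts either maximality or the emptiness of the gap. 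That argument does not need any genericity assumption, so it covers the configurations your proof leaves open. You would need to either adopt an argument of that kind or find a genuinely general proof of the outward property before your Case~1 is complete.
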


\begin{proof}
In the remainder of the proof we assume $r=0$. We first reduce the case of rank $k$ to the case of full rank. Note that if $0 \notin \mathrm{Aff}(V)$, where $\mathrm{Aff}(V)$ denotes the affine hull of $V$, then all subsets of $V$ are non-balanced and $P(V,r)=\conv(V)$ is contractible. Otherwise, $(V,r)$ is a pair of full rank in the corresponding affine subspace.

    For the first part, consider a ray $l(v) = \{ tv \mid v \in S^{d-1},\; t \geq 0 \}$ starting at $0$ in the direction $v$.
    
    \begin{claim}
    \label{convexityclaim}
        The intersection $l(v) \cap P(V,0)$ consists of a single connected component of the form $[t_1(v)v,\, t_2(v)v] \subset l(v)$ with $t_1(v) \leq t_2(v)$. 
    \end{claim}

    \begin{proof}
        Suppose instead that $l(v) \cap P(V,0)$ contains several segments. Let $[t_1v, t_2v]$ be the first such segment and $[t_3v, t_4v]$ the second, with $(t_2v, t_3v) \cap P(V,0) = \emptyset$. Let $S_1 \subset V$ be the maximal subset such that $0 \notin \conv(S_1)$ and $\conv(S_1) \cap l = t_2v$. Since $t_2v \in P(V,0)$, such an $S_1$ exists. Similarly define $S_2$ for the point $t_3v$.  

        Now consider the hyperplane $H$ through $t_2v$ that separates $S_1$ from $t_3v$. Clearly $\conv(S_1) \subset H$, and $H$ separates $0$ from $t_3v$. Assume $t_3v$ lies in the positive half-space $H^+$. Then at least one point $v_i \in S_2$ also lies in $H^+$. Consider the set $S_1 \cup \{v_i\}$. Since all points of $S_1 \cup \{v_i\}$ avoid the negative half-space $H^-$, we have $0 \notin \conv(S_1 \cup \{v_i\})$. If $\conv(S_1 \cup \{v_i\}) \cap l = t_2v$, then $S_1$ was not maximal. Otherwise $\conv(S_1 \cup \{v_i\}) \cap l$ is a segment $[t_2v, av]$ with $a > t_2$, which contradicts the assumption.  
    \end{proof}   

    This claim is the key step to prove homotopy equivalence. For any $x \in P(V,0)$ there exists a unique $v(x) \in S^{d-1}$ and $t(x) > 0$ such that $x = t(x)v(x)$; moreover, the functions $v(x)$ and $t(x)$ are continuous. Since $0 \in \mathrm{int}(\conv(V))$, the set $\{t_2(v)v \mid v \in S^{d-1}\}$ is exactly $\partial \conv(V)$. Thus $\hat{l}(v) = t_2(v)v$ gives a homeomorphism between $S^{d-1}$ and $\partial \conv(V)$. 

    We now show that $\partial \conv(V)$ is a deformation retract of $P(V,0)$. Define a homotopy $f_h: P(V,0) \to P(V,0)$ by 
    \[
    f_h(x) = (1-h)\,t(x)v(x) + h\,\hat{l}(v(x)).
    \] 
    By Claim~\ref{convexityclaim}, $f_h$ is a deformation retraction. Note that
    \[
    \tau_{0}(P(V,0)) = S^{d-1}, \qquad 
    \tau_{0} \circ \hat{l} = \mathrm{id}_{S^{d-1}}, \qquad 
    \hat{l} \circ \tau_{0} \simeq \mathrm{id}_{P(V,0)},
    \]
    hence $\tau_{0}$ is a homotopy equivalence.

    \medskip
  For the second part, suppose $0 \notin \conv(V)$. Then $P(V,0) = \conv(V) \simeq B^{d}$. If $0 \in \partial(\conv(V))$, then there exists a supporting hyperplane $W$ with $0 \in W$. This hyperplane divides $\mathbb{R}^d$ into a closed half-space $\mathrm{Cl}({W}^{-})$ and an open half-space $W^{+}_{}$, and it also partitions $V$ into two subsets:
\[
S_1 = \{ v_i \in V \mid v_i \in W \}, \qquad 
S_2 = \{ v_i \in V \mid v_i \in W^{+} \}.
\] 
Clearly $S_2 \neq \emptyset$. Assume $S_1$ has rank $k \leq d-1$. Then $0$ lies in the relative interior of $\conv(S_1)$, and $P(S_1,0) \simeq S^{k-1}$ from the previous part of the lemma. For $S_2$ we have $P(S_2,0) = \conv(S_2) \simeq B^{\ell}$ for some $\ell$. 

Now pick any point $v_i \in S_2$ (such a point exists since $S_2 \neq \emptyset$). For any subset $S \subset V$, if $0 \notin \conv(S)$ then also $0 \notin \conv(S \cup \{v_i\})$. Hence $P(V,0)$ is a cone with apex $v_i$, and therefore $P(V,0)$ is contractible.
\end{proof}

The family of non-balanced sets forms an abstract simplicial complex $\mathcal{K}(V,r)$, whose vertex set is
\[
V(\mathcal{K}(V,r)) = [m],
\]
and where $I \subset [m]$ is a simplex if and only if $r \notin \conv\{v_i \mid i \in I\}$.

The next theorem describes the homotopy type of $\mathcal{K}(V,r)$.

\begin{theorem}
\label{NonBalComp}
   Let $(V,r)$ be a pair in $\mathbb{R}^d$ of rank $k \leq d$. Then 
   \begin{enumerate}
       \item If $r \in \mathrm{relint}(\conv(V))$, then $|\mathcal{K}(V,r)|$ is homotopy equivalent to the sphere $S^{k-1}$.
       \item If $r \notin \mathrm{relint}(\conv(V))$, then $|\mathcal{K}(V,r)|$ is contractible. 
     
   \end{enumerate}

\end{theorem}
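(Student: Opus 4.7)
The proof mirrors the case split of Lemma~\ref{PBallLemma}. For part~(2) I will show $\mathcal{K}(V,r)$ is a cone, and for part~(1) I will apply the Nerve Lemma to a canonical open cover of $S^{k-1}$ whose nerve is exactly $\mathcal{K}(V,r)$.

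For part~(2), if $r \notin \conv(V)$ (in particular if $r \notin \mathrm{Aff}(V)$), then no subset of $V$ is balanced, so $\mathcal{K}(V,r)$ is the full simplex on $V$ and is contractible. If $r \in \partial\conv(V) \cap \mathrm{Aff}(V)$, take a supporting hyperplane of $\conv(V)$ at $r$ with corresponding linear functional $\ell$ vanishing on the hyperplane and nonnegative on $V$, and pick any $v_{i_0} \in V$ with $\ell(v_{i_0}) > 0$. The same supporting-hyperplane computation as in the proof of Lemma~\ref{PBallLemma} applies: if $r = \alpha v_{i_0} + (1-\alpha) y$ with $y \in \conv(S)$ and $\alpha > 0$, then evaluating $\ell$ gives $0 = \alpha \ell(v_{i_0}) + (1-\alpha)\ell(y) > 0$, a contradiction. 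Hence $S$ non-balanced implies $S \cup \{v_{i_0}\}$ non-balanced, so $\mathcal{K}(V,r)$ is a cone with apex $v_{i_0}$ and is contractible.

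For part~(1), first reduce to full rank by restricting to $\mathrm{Aff}(V) \cong \mathbb{R}^k$, and by Lemma~\ref{RemBall} normalize so that $r = 0$ and $V \subset S^{k-1}$. The plan is to apply the Nerve Lemma to the cover of $S^{k-1}$ by open hemispheres
\[
U_i = \{x \in S^{k-1} : x \cdot v_i > 0\}, \qquad i = 1, \dots, m.
\]
The assumption $0 \in \mathrm{int}(\conv(V))$ is equivalent to $V$ not being contained in any closed half-space through $0$, which is equivalent to $\{U_i\}$ covering $S^{k-1}$. Each $U_i$ is contractible, and any nonempty intersection $\bigcap_{j \in J} U_j$ is the intersection of $S^{k-1}$ with an open convex cone $C_J$; since $C_J$ is convex and avoids $0$, it is great-circle star-convex at any of its points (convex combinations stay in $C_J$ and project to geodesics on $S^{k-1}$), hence contractible.

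The key combinatorial identification is by strict separation: $\bigcap_{j \in J} U_j \neq \emptyset$ iff $\{v_j\}_{j \in J}$ lies in an open half-space not containing $0$, iff $0 \notin \conv(\{v_j\}_{j \in J})$, iff $J$ is non-balanced. Thus the nerve of $\{U_i\}$ is exactly $\mathcal{K}(V,r)$, and the Nerve Lemma yields $|\mathcal{K}(V,r)| \simeq S^{k-1}$. The main technical points to verify carefully are the great-circle convexity of the intersections and the strict-separation criterion for non-emptiness; once these are in hand, the argument is essentially immediate.
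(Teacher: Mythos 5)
Your proof follows the same route as the paper: reduce to $V \subset S^{k-1}$ and $r=0$ via Lemma~\ref{RemBall}, cover $S^{k-1}$ by open hemispheres $\{x : \langle x,v_i\rangle > 0\}$, identify the nerve of this cover with $\mathcal{K}(V,r)$ using strict separation, and apply the Nerve Theorem for part~(1); and for part~(2), use the full simplex when $r \notin \conv(V)$ and a cone over a vertex on the strict side of a supporting hyperplane when $r \in \partial\conv(V)$. The only difference is that you spell out the geodesic star-convexity of the hemisphere intersections, which the paper's proof states more tersely ("each $H_{v_i}$ is convex").
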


We remark that Theorem~\ref{NonBalComp1} is precisely the first part of Theorem~\ref{NonBalComp}. We now turn to the proof.

\begin{proof}

As in the proof of Lemma~\ref{PBallLemma}, the same argument reduces our theorem to the full-rank case.

For $\bs$-equivalent pairs we obtain the same non-balanced simplicial complexes. By Lemma~\ref{RemBall} it is therefore sufficient to consider the case $V \subset S^{d-1}$ and $r=0$.

We begin with the first part of the theorem. With each point $v_i$ we associate an open hemisphere
\[
H_{v_i} = \{ x \in S^{d-1} \mid \langle x, v_i \rangle > 0 \}.
\]

\begin{enumerate}
    \item The family $\{H_{v_i}\}_{i=1}^m$ forms an open cover of $S^{d-1}$. Indeed, suppose there exists $y \in S^{d-1}$ such that $y \notin H_{v_i}$ for all $i=1,\dots,m$. Then $\langle y, v_i \rangle \leq 0$ for all $i$, so all points $v_i$ lie in the closed hemisphere opposite to $y$. This would imply $0 \notin \mathrm{int}(\conv(V))$, a contradiction. Thus $\mathcal{H}=\{H_{v_i}\}_{i=1}^m$ is an open cover of $S^{d-1}$.
    
    \item The nerve of this cover, $N(\mathcal{H})$, is isomorphic to $\mathcal{K}(V,0)$. Indeed, if $0 \notin \conv(S)$, then there exists a hyperplane through $0$ separating $0$ from $S$. Choosing $y \in S^{d-1}$ orthogonal to this hyperplane, we have $\langle y, v_i \rangle > 0$ for each $v_i \in S$, so $\bigcap_{v_i \in S} H_{v_i} \neq \emptyset$. Hence $\mathcal{K}(V,0) \subset N(\mathcal{H})$. Conversely, if $\bigcap_{v_i \in S} H_{v_i} \neq \emptyset$, then there exists $y \in S^{d-1}$ with $\langle y, v_i \rangle > 0$ for all $v_i \in S$, which implies $0 \notin \conv(S)$. Thus $N(\mathcal{H}) \cong \mathcal{K}(V,0)$.
    
    \item Since each $H_{v_i}$ is convex, any finite intersection $\bigcap_{v_i \in S} H_{v_i}$ is contractible. Therefore $\mathcal{H}$ is a good cover. By the Nerve Theorem (see, for example, \cite{Hatcher}), $|\mathcal{K}(V,0)|$ is homotopy equivalent to $S^{d-1}$.
\end{enumerate}

We now turn to the second part. If $0 \notin \conv(V)$, then $\mathcal{K}(V,0) = 2^{[m]}$ is the full simplex and therefore contractible. If $0 \in \partial(\conv(V))$, then there exists a supporting hyperplane $W$ with $0 \in W$. This hyperplane divides $\mathbb{R}^d$ into a closed half-space $\mathrm{Cl}({W}^{-})$ and an open half-space $W^{+}_{}$, and it also partitions $V$ into two subsets:
\[
S_1 = \{ v_i \in V \mid v_i \in W \}, \qquad 
S_2 = \{ v_i \in V \mid v_i \in W^{+} \}.
\] 
Clearly $S_2 \neq \emptyset$. Suppose $S_1$ has rank $l_1 \leq d-1$; then $\mathcal{K}(S_1,r) \simeq S^{l_1}$. For $S_2$ we have $\mathcal{K}(S_2,r) \simeq \Delta^{l_2}$ for some $l_2$.Using the same argument as in Lemma~\ref{PBallLemma}, we see that $\mathcal{K}(V,0)$ is a cone over any point $v_i \in S_2$, and hence contractible. In fact, $\mathcal{K}(V,r)$ can be written explicitly as the join
\[
\mathcal{K}(V,r) = \mathcal{K}(S_1,r) * \mathcal{K}(S_2,r).
\]

\end{proof}

Note that Lemma~\ref{PBallLemma} and Theorem~\ref{NonBalComp} can be applied to $(V,r)$ even if it is not of full rank. Indeed, consider the minimal affine subspace containing $\conv(V)$; within this subspace the pair $(V,r)$ is of full rank.

\begin{corollary}
    Let $(V,r)$ be a pair of rank $n$ with $r \in \mathrm{int}(\conv(V))$, and let $(V',r')$ be a pair of rank $k$ with $r' \in \mathrm{int}(\conv(V'))$. Suppose $(V,r)$ is $\bs$-equivalent to $(V',r')$. Then $n=k$.
\end{corollary}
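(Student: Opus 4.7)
The proof will be a direct application of Theorem~\ref{NonBalComp} together with the basic topological fact that spheres of different dimensions are not homotopy equivalent. The overall idea is that the non-balanced complex is a $\bs$-invariant of the pair whose homotopy type detects the rank.

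The plan is as follows. First, I would observe that the definition of $\bs$-equivalence is phrased on the level of index sets, so $(V,r)$ and $(V',r')$ being $\bs$-equivalent means that a subset $\{v_{i_1},\dots,v_{i_j}\}\subset V$ is $r$-balanced if and only if the correspondingly-indexed subset $\{v'_{i_1},\dots,v'_{i_j}\}\subset V'$ is $r'$-balanced. Passing to complements, this says exactly that the abstract simplicial complexes $\mathcal{K}(V,r)$ and $\mathcal{K}(V',r')$ coincide as subsets of $2^{[m]}$; in particular, their geometric realizations are homeomorphic, and in particular homotopy equivalent.

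Next, I would apply Theorem~\ref{NonBalComp} to each pair, noting the remark immediately preceding the corollary: even though the two pairs sit in possibly different ambient Euclidean spaces and need not be of full rank there, one passes to the minimal affine subspace containing $\conv(V)$ (respectively $\conv(V')$), in which the pair becomes full-rank and the point lies in the (relative) interior. Part~1 of the theorem then yields
\[
|\mathcal{K}(V,r)|\simeq S^{n-1}\qquad\text{and}\qquad |\mathcal{K}(V',r')|\simeq S^{k-1}.
\]
Combining with the previous paragraph gives $S^{n-1}\simeq S^{k-1}$.

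Finally, since spheres of different dimensions have non-isomorphic reduced homology (or, equivalently, distinct first non-trivial homotopy groups), the homotopy equivalence $S^{n-1}\simeq S^{k-1}$ forces $n-1=k-1$, i.e.\ $n=k$. I do not anticipate any real obstacle here; the only subtlety worth flagging explicitly is the reduction to the full-rank case inside the respective affine hulls, which is needed so that Theorem~\ref{NonBalComp}(1) applies with the sphere dimension equal to the rank minus one rather than the ambient dimension minus one.
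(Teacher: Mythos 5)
Your proof is correct and is exactly the paper's one-line argument, just spelled out more explicitly: $\bs$-equivalence gives $\mathcal{K}(V,r)=\mathcal{K}(V',r')$, Theorem~\ref{NonBalComp}(1) identifies each realization with a sphere of dimension one less than the rank, and spheres of different dimensions are not homotopy equivalent. The attention to the full-rank reduction via the affine hull is a reasonable point to flag but is already handled by the remark preceding the corollary.
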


\begin{proof}
    We have 
    \[
    S^{n-1} \simeq |\mathcal{K}(V,r)| = |\mathcal{K}(V',r')| \simeq S^{k-1}.
    \] 
\end{proof}

\medskip

It is clear that $P(V,r)$ is the image of $|\mathcal{K}(V,r)|$ under the affine map 
\[
\pi: |\mathcal{K}(V,r)| \longrightarrow \mathbb{R}^d, \qquad \pi(i) = v_i.
\] 
In fact, we have the following:

\begin{lemma}
\label{PiHomEq}
  The map $\pi: |\mathcal{K}(V,r)| \to P(V,r)$ is a homotopy equivalence.   
\end{lemma}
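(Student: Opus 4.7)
The plan is to compose $\pi$ with the homotopy equivalence $\tau_r$ from Lemma~\ref{PBallLemma} and compare the resulting map $\bar{\pi} := \tau_r \circ \pi : |\mathcal{K}(V,r)| \to S^{d-1}$ to the canonical nerve map extracted from the proof of Theorem~\ref{NonBalComp}. Since $\tau_r$ is a homotopy equivalence, it will suffice to show $\bar{\pi}$ is one. First I run the usual reductions: by Lemma~\ref{RemBall} I may assume $r = 0$ and $V \subset S^{d-1}$, and by passing to the affine hull of $V$ I may further assume full rank (the case $r \notin \mathrm{Aff}(V)$ makes $\conv(V)$ contractible together with $|\mathcal{K}(V,r)|$, so any map between them is trivially a homotopy equivalence). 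Similarly, if $r \in \partial\conv(V)$, both $|\mathcal{K}(V,0)|$ and $P(V,0)$ are contractible by Theorem~\ref{NonBalComp} and Lemma~\ref{PBallLemma}, so $\pi$ is automatically a homotopy equivalence. This leaves only the interior full-rank case.

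In that case, recall from the proof of Theorem~\ref{NonBalComp} that the open hemispheres $\mathcal{H} = \{H_{v_i}\}_{i=1}^{m}$ with $H_{v_i} = \{x \in S^{d-1} : \langle x, v_i\rangle > 0\}$ form a good open cover of $S^{d-1}$ whose nerve is $\mathcal{K}(V,0)$. The Nerve Theorem therefore gives a canonical map $c_\Phi : S^{d-1} \to |\mathcal{K}(V,0)|$, induced by any partition of unity $\Phi = \{\varphi_i\}$ subordinate to $\mathcal{H}$, which is a homotopy equivalence. Also note that $\bar{\pi}$ is well-defined on all of $|\mathcal{K}(V,0)|$: if $\alpha = \sum \alpha_i e_i$ has non-balanced support $S$, then $\pi(\alpha) = \sum_{i \in S} \alpha_i v_i$ cannot equal $0$, so $\tau_0$ may be applied to it.

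The heart of the argument is the verification that $\bar{\pi} \circ c_\Phi \simeq \mathrm{id}_{S^{d-1}}$. For $x \in S^{d-1}$ we have $c_\Phi(x) = \sum_i \varphi_i(x) e_i$, and since $\varphi_i(x) > 0$ forces $x \in H_{v_i}$, i.e., $\langle x, v_i \rangle > 0$, the vector $\pi(c_\Phi(x)) = \sum_i \varphi_i(x) v_i$ satisfies
\[
\langle x, \pi(c_\Phi(x)) \rangle = \sum_i \varphi_i(x) \langle x, v_i \rangle > 0,
\]
because at least one $\varphi_i(x)$ is positive (as $\Phi$ is a partition of unity). Hence $\bar{\pi}(c_\Phi(x)) = \tau_0(\pi(c_\Phi(x)))$ lies strictly in the open hemisphere centered at $x$, in particular is not antipodal to $x$, so the spherical straight-line homotopy
\[
H(x,t) = \frac{(1-t)\,x + t\,\bar{\pi}(c_\Phi(x))}{\|(1-t)\,x + t\,\bar{\pi}(c_\Phi(x))\|}
\]
is well-defined and connects $\bar{\pi} \circ c_\Phi$ to $\mathrm{id}_{S^{d-1}}$.

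Once $\bar{\pi} \circ c_\Phi \simeq \mathrm{id}$ is established, a standard argument closes the proof: composing on the right with a homotopy inverse $c_\Phi^{-1}$ of $c_\Phi$ gives $\bar{\pi} \simeq c_\Phi^{-1}$, hence $\bar{\pi}$ is a homotopy equivalence; combined with the homotopy equivalence $\tau_r$ this shows $\pi = \tau_r^{-1} \circ \bar{\pi}$ is one as well. I expect the only real content to be the inner-product computation $\langle x, \pi(c_\Phi(x))\rangle > 0$, which uses essentially the same separating-hyperplane idea as in the proof of Theorem~\ref{NonBalComp}; everything else is routine bookkeeping around the Nerve Theorem and Lemma~\ref{PBallLemma}.
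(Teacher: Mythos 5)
Your proof is correct and follows essentially the same route as the paper's: both reduce to $V \subset S^{d-1}$, both use the hemisphere cover $\mathcal{H}$ whose nerve is $\mathcal{K}(V,0)$, and both hinge on the same inner-product computation $\langle x, \pi(c_\Phi(x))\rangle > 0$ to rule out antipodality. Your conclusion is marginally cleaner — you exhibit the straight-line homotopy $\bar{\pi}\circ c_\Phi \simeq \mathrm{id}$ explicitly and then use two-sided-inverse algebra, whereas the paper phrases the final step via degrees ($\deg f = \pm 1 \Rightarrow \deg\pi = \pm1 \Rightarrow$ homotopy equivalence), leaving implicit the passage from degree $\pm 1$ to homotopy equivalence between sphere-homotopy-type spaces.
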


\begin{proof}
As usual, we restrict to the case $V \subset S^{d-1}$.
\begin{enumerate}
    \item Let $\mathcal{H}$ be the cover of $S^{d-1}$ defined in the proof of Theorem~\ref{NonBalComp}. Then there exists a partition of unity $\Phi_{\mathcal{H}} = \{\varphi_1, \dots, \varphi_{m}\}$ and a map $\rho_{\mathcal{H}}: S^{d-1} \to |\mathcal{K}(V,r)|$. By the Nerve Theorem, $\rho_{\mathcal{H}}$ is a homotopy equivalence, and moreover $\deg(\rho_{\mathcal{H}}) = \pm 1$. 
    \item The map $\tau_{r}: P(V,r) \to S^{d-1}$ is a homotopy equivalence with $\deg(\tau_{r}) = 1$.
    \item Define $f: S^{d-1} \to S^{d-1}$ by 
    \[
    f = \tau_{r} \circ \pi \circ \rho_{\mathcal{H}}, 
    \qquad 
    f(x) = \frac{\sum\limits_{i=1}^m \varphi_i(x) v_i}{\left\lVert \sum\limits_{i=1}^m \varphi_i(x) v_i \right\rVert}.
    \] 
    Note that if $\varphi_i(x) > 0$ then $\langle x, v_i \rangle > 0$, hence also $\langle x, f(x) \rangle > 0$. Thus $f(x) \neq -x$ for any $x \in S^{d-1}$. For such maps it is known that $\deg(f) = \pm 1$. Consequently $\deg(\pi) = \pm 1$, and therefore $\pi$ is a homotopy equivalence.
\end{enumerate}
\end{proof}

\section{Balanced subsets and covers}

In the work \cite{MusH} a generalization of the Definition \ref{HomCov} was proposed. In this generalization instead of vertices of the simplex $\Delta^{m-1}$ we consider an arbitrary set of points $V=\{v_1,\dots,v_m\}\subset \mathbb{R}^d$. In this Section we will take a closer look at this object. In particular, we present modified versions of the Theorems \ref{ExtSph} and \ref{ExtMan}.

\begin{definition}
    Suppose we have a pair $(V,r)$ in $\mathbb{R}^d$ with $V=\{v_1, \dots, v_m\}$. Let ${\mathcal{F}=\{F_1,\dots,F_m\}}$ be a cover of a topological space  $T$. For a simplex $\sigma \in N(\mathcal{F})$ we say that $\sigma$ is $r$-balanced (with respect to $(V,r)$) if $\sigma$ corresponds to $r$-balanced subset of $V$.
\end{definition}

For an open cover $\mathcal{F}$ of $T$ we can choose a partition of unity $\Phi=\{\varphi_1, \dots, \varphi_m\}$ and define a map $\rho_{\Phi,V}: T \rightarrow \conv(V)$ where $$\rho_{\Phi,V}(x)=\sum\limits_{i=1}^{m}\varphi_i(x)v_i.$$ Suppose there are no $r$-balanced simplices in  $N(\mathcal{F})$. Then for any partition of unity $\Phi$ the map $\rho_{\Phi,V}$ will be a map from $T$ to $P(V,r)$. In particular, the point $r$ will not be an image of the map $\rho_{\Phi,V}$. Then the map $g_{\Phi,V,r}: T \rightarrow S^{d-1}$, $g_{\Phi,V,r}=\tau_{r}\circ \rho_{\Phi,V}$ is a correct continuous map. Moreover, in \cite{MusH} the following lemma was proven.

\begin{lemma}
    For any two partitions of unity the corresponding maps $g_{\Phi,V,r}$ and $g_{\Psi,V,r}$ are homotopic and they define the same homotopy class in $[T, \mathbb{S}^{d-1}]$.
\end{lemma}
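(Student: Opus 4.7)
The plan is to interpolate linearly between the two partitions of unity and then post-compose with $\tau_{r}$. First I would set
\[
\Phi_t := (1-t)\Phi + t\Psi, \qquad t \in [0,1],
\]
so that $\varphi_i^{(t)} := (1-t)\varphi_i + t\psi_i$. Each $\varphi_i^{(t)}$ is nonnegative, $\sum_i \varphi_i^{(t)} \equiv 1$, and $\mathrm{supp}(\varphi_i^{(t)}) \subset \mathrm{supp}(\varphi_i) \cup \mathrm{supp}(\psi_i) \subset F_i$, so $\Phi_t$ is again a partition of unity subordinate to $\mathcal{F}$ for every $t \in [0,1]$.

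Next I would introduce the continuous map $H : T \times [0,1] \to \mathbb{R}^d$ defined by
\[
H(x,t) := \rho_{\Phi_t,V}(x) = (1-t)\,\rho_{\Phi,V}(x) + t\,\rho_{\Psi,V}(x),
\]
and argue that $H(x,t) \neq r$ for every $(x,t)$. For fixed $x \in T$, let $I(x) := \{ i : x \in F_i \}$. Since all partitions of unity in play are subordinate to $\mathcal{F}$, both $\rho_{\Phi,V}(x)$ and $\rho_{\Psi,V}(x)$, and hence their convex combination $H(x,t)$, lie in $\conv\{ v_i : i \in I(x) \}$. The intersection $\bigcap_{i \in I(x)} F_i$ contains $x$, so $I(x)$ is a simplex of $N(\mathcal{F})$; by hypothesis it is not $r$-balanced, which is exactly the assertion that $r \notin \conv\{ v_i : i \in I(x) \}$.

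Finally, because $\tau_{r}$ is continuous on $\mathbb{R}^d \setminus \{r\}$, the composition
\[
G := \tau_{r} \circ H \;:\; T \times [0,1] \longrightarrow S^{d-1}
\]
is a well-defined continuous homotopy with $G(\,\cdot\,,0) = g_{\Phi,V,r}$ and $G(\,\cdot\,,1) = g_{\Psi,V,r}$, which proves that the two maps determine the same class in $[T,\mathbb{S}^{d-1}]$. The one place requiring genuine thought is the verification that $H$ avoids $r$ uniformly in $(x,t)$; this is precisely where the hypothesis that $N(\mathcal{F})$ contains no $r$-balanced simplex enters, and it is what makes the construction of $g_{\Phi,V,r}$ well-defined in the first place.
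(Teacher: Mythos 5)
Your proof is correct and follows exactly the paper's approach: a linear interpolation $\Phi_t=(1-t)\Phi+t\Psi$ of the two partitions of unity, with the observation that each $\Phi_t$ remains subordinate to $\mathcal{F}$ and that the image avoids $r$ because $N(\mathcal{F})$ has no $r$-balanced simplex. You simply spell out in more detail the step the paper compresses into ``$g_{\Phi_t,V,r}$ is a correct mapping for any $t$.''
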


\begin{proof}
    Let there be two partitions of unity $\Phi$ and $\Psi$. Consider a linear homotopy ${\Phi_t=(1-t)\Phi+t\Psi}$. $\Phi_t$ is a partition of unity and $g_{\Phi_t,V,r}$ is a correct mapping for any $t$, then $g_{\Phi_t,V,r}$ is a homotopy between $g_{\Phi,V,r}$ and $g_{\Psi,V,r}$.
\end{proof}

Actually, if $N(\mathcal{F})$ does not have $r$-balanced simplices, then there is an embedding $\iota : |N(\mathcal{F})| \hookrightarrow |\mathcal{K}(V,r)|$. Note that the partition of unity defines a canonical map $c_{\Phi}: T \rightarrow N(\mathcal{F})$ and $P(V,r)=\pi(|\mathcal{K}(V,r)|)$. Then the map $\rho_{\Phi,V}$ can be decomposed into the composition $\rho_{\Phi,V}=\pi \circ \iota \circ c_{\Phi} $ and $g_{\Phi,V,r}=\tau \circ \pi \circ \iota \circ c_{\Phi}$. 

Now  we are ready to give the following definition.

\begin{definition}
  \label{VHomCov}
  Let $(V,r)$ be a pair in $\mathbb{R}^d$. Let $\mathcal{F}=\{F_1, \dots, F_m\}$ be an open cover of $T$ such that there is no $r$-balanced simplices in the nerve $N(\mathcal{F})$. Denote by $[\mathcal{F}_{(V,r)}]$ the homotopy class of the map $[g_{\Phi,V,r}]=[\tau \circ \pi \circ \iota \circ c_{\Phi}]$ in the set $[T, \mathbb{S}^{d-1}]$. In the case when there exists $r$-balanced simplex in $N(\mathcal{F})$ then this homotopy class is undefined and we denote this as $[\mathcal{F}_{(V,r)}] = N/A$

  If $T$ is an oriented manifold of dimension $d-1$ then the degree $\text{deg}(\mathcal{F}_{(V,r)})$ is well defined.
\end{definition}

It is clear that for a closed cover we can define a homotopy class (relative to $(V,r)$) in the same way as in Lemma \ref{ClosedLemma}. It is also clear that, similarly, one can define the homotopy class $[L_{(V, r)}]$ of a coloring $L: Vert(K) \rightarrow [m]$ of the vertices of a simplicial complex $K$ as the class of a piecewise-linear mapping $f_L: |K| \rightarrow P(V,r)$. And if there exists $r$-balanced simplex in $K$ this is denoted as $[L_{(V,r)}]=N/A$. 

\begin{remark}
    Note that, as before, we can consider only the case when $V \subset S^{d-1}$ since $\tau(V)$ is $\bs$-equivalent to $V$ and $g_{\Phi,V,r}$ is the same map as $g_{\Phi,\tau_{r}(V),r}$.
\end{remark}

Definition~\ref{VHomCov} depends strongly on the choice of the pair $(V,r)$. It is clear that the homotopy class of a cover $[\mathcal{F}_{(V,r)}]$ may change if we choose another pair $(V',r')$. In fact, it is easy to construct an example where $[\mathcal{F}_{(V,r)}] = 0$ while $[\mathcal{F}_{(V',r')}] \neq 0$ if $(V,r)$ and $(V',r')$ are not $\bs$-equivalent. Moreover, this homotopy class can also depend on the enumeration of the points $V = \{v_1, \dots, v_m\}$. 

What happens if $(V,r)$ is $\bs$-equivalent to $(V',r')$? The next theorem answers this question. Before stating it, we need to define an involution $\mathrm{inv}$ on the set $[T,S^{d-1}]$.

If $f: S^{d-1} \rightarrow S^{d-1}$ is a homotopy equivalence then $\text{deg}(f)=\pm1$. Note that $[f]$ induces an automorphism on $[T, S^{d-1}]$ by  $[f]: [g] \mapsto [f \circ g]$. If $\text{deg}(f)=1$ then $[f]: [T, S^{d-1}] \rightarrow [T, S^{d-1}]$ is the identity map. If $\text{deg}(f)=-1$ then $[f]: [T, S^{d-1}] \rightarrow [T, S^{d-1}]$ is an involution and $[f]^2=id$. This involution we denote simply  as $\text{inv}$. Note that $[g]=0$ in $[T, S^{d-1}]$ if and only if $\text{inv}([g])=0$ in $[T, S^{d-1}]$.

\begin{theorem}

\label{IndTh}
    Suppose we have a pair $(V,r)$ and a pair $(V',r')$ in $\mathbb{R}^d$ such that $(V,r)$ is $\bs$-equivalent to $(V',r')$. Let $\mathcal{F}$ be a cover of $T$. Then $[\mathcal{F}_{(V,r)}]=[\mathcal{F}_{(V',r')}]$ or $[\mathcal{F}_{(V,r)}]=\text{inv}([\mathcal{F}_{(V',r')}])$. In particular, $[\mathcal{F}_{(V,r)}]=0 \iff[\mathcal{F}_{(V',r')}]=0$.

    If $T$ is an oriented manifold of dimension $d-1$ then $\text{deg}(\mathcal{F}_{(V,r)})=\pm \text{deg}(\mathcal{F}_{(V',r')})$
\end{theorem}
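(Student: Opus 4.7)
The plan is to realize both $g_{\Phi,V,r}$ and $g_{\Phi,V',r'}$ as compositions of a single common map into $|\mathcal{K}(V,r)|$ with two competing homotopy equivalences to the sphere, and then exploit the fact that any two self-homotopy-equivalences of $S^{d-1}$ are homotopic up to a reflection.

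The first step is the structural observation that, since $(V,r)$ and $(V',r')$ are $\bs$-equivalent, the abstract simplicial complexes $\mathcal{K}(V,r)$ and $\mathcal{K}(V',r')$ agree on the common vertex set $[m]$; call this common complex $\mathcal{K}$. The nerve $N(\mathcal{F})$, the embedding $\iota : |N(\mathcal{F})| \hookrightarrow |\mathcal{K}|$, and the canonical map $c_\Phi : T \to |N(\mathcal{F})|$ then depend only on $\mathcal{F}$ and $\Phi$, not on the particular pair chosen. Writing $\pi_V, \pi_{V'} : |\mathcal{K}| \to \mathbb{R}^d$ for the two affine realizations sending vertex $i$ to $v_i$ and $v'_i$, and setting $\alpha_V := \tau_r \circ \pi_V$, $\alpha_{V'} := \tau_{r'} \circ \pi_{V'}$, one obtains
\[
g_{\Phi,V,r} \;=\; \alpha_V \circ (\iota \circ c_\Phi), \qquad g_{\Phi,V',r'} \;=\; \alpha_{V'} \circ (\iota \circ c_\Phi).
\]

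Next, in the main case $r \in \mathrm{relint}(\conv V)$ with $V$ of full rank $d$ (which by $\bs$-equivalence also holds for $(V',r')$, since otherwise $|\mathcal{K}|$ would be both a sphere and contractible by Theorem \ref{NonBalComp}), Lemmas \ref{PBallLemma} and \ref{PiHomEq} imply that each of $\alpha_V, \alpha_{V'}$ is a homotopy equivalence $|\mathcal{K}| \to S^{d-1}$. Since $|\mathcal{K}| \simeq S^{d-1}$, each has degree $\pm 1$, and by Hopf's classification of self-maps of spheres two such equivalences are homotopic if and only if their degrees agree. Hence either $\alpha_V \simeq \alpha_{V'}$, or $\alpha_V \simeq a \circ \alpha_{V'}$ for a reflection $a : S^{d-1} \to S^{d-1}$ representing $\text{inv}$. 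Precomposition with $\iota \circ c_\Phi$ then yields $[\mathcal{F}_{(V,r)}] = [\mathcal{F}_{(V',r')}]$ or $[\mathcal{F}_{(V,r)}] = \text{inv}([\mathcal{F}_{(V',r')}])$, as required. The degenerate case $r \notin \mathrm{relint}(\conv V)$ is handled uniformly: by Lemma \ref{PBallLemma}(2) and Theorem \ref{NonBalComp}, $|\mathcal{K}|$ is either contractible or $\alpha_V$ factors through a proper subsphere of $S^{d-1}$, and the same combinatorial conditions transfer to $(V',r')$, so both homotopy classes vanish and the conclusion is trivial.

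The vanishing equivalence $[\mathcal{F}_{(V,r)}] = 0 \iff [\mathcal{F}_{(V',r')}] = 0$ is built into the definition of $\text{inv}$, and the degree assertion for $T$ an oriented closed $(d-1)$-manifold follows because $[T, S^{d-1}] \cong \mathbb{Z}$ via the degree map and $\text{inv}$ acts by multiplication by $-1$. The main obstacle I anticipate is isolating the combinatorial object $\mathcal{K}$ as the true mediator of $g_{\Phi,V,r}$, since the definition via partitions of unity obscures the dependence on the discrete data; once one separates the canonical map $c_\Phi$ from the affine realization $\pi_V$, the rest is the classical rigidity statement $[S^{d-1}, S^{d-1}] = \mathbb{Z}$ via degree.
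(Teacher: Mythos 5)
Your proposal is correct and follows essentially the same route as the paper: both proofs exploit the factorization $g_{\Phi,V,r} = (\tau_r \circ \pi_V) \circ (\iota \circ c_\Phi)$, the identification $\mathcal{K}(V,r) = \mathcal{K}(V',r')$, and the fact that the resulting homotopy equivalence $|\mathcal{K}| \to S^{d-1}$ has degree $\pm 1$ (via Lemmas \ref{PBallLemma}, \ref{PiHomEq} and Theorem \ref{NonBalComp}). The only cosmetic difference is that you compare the two equivalences $\alpha_V, \alpha_{V'}$ directly using the degree classification of self-maps of $S^{d-1}$, whereas the paper relates each $[g_{\Phi,V,r}]$ to the common class $[\iota \circ c_\Phi]$ in $[T,|\mathcal{K}|]$ and lets the sign of $\deg(\pi)$ carry the involution — the same idea in slightly different notation.
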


\begin{proof}
    
From the decomposition $g_{\Phi,V,r}=\tau \circ\pi\circ\iota \circ c_{\Phi}$ and from  Lemma \ref{PBallLemma}, Theorem \ref{NonBalComp}, and Lemma \ref{PiHomEq}  we have three cases
\begin{enumerate}
    \item $[g_{\Phi,V,r}]=[\iota \circ c_{\Phi}]$ if $r \in \text{int}(\conv(V))$ and $\text{deg}(\pi)=1$.
    \item $[g_{\Phi,V,r}]=\text{inv}([\iota \circ c_{\Phi}])$ if $r \in \text{int}(\conv(V))$ and $\text{deg}(\pi)=-1$.
    \item $[g_{\Phi,V,r}]=0=[\iota \circ c_{\Phi}]$ if $r \notin \text{int}(\conv(V))$.
\end{enumerate}
The rest follows from the fact that $\mathcal{K}(V,r)=\mathcal{K}(V',r')$.
\end{proof}

Now we will extend Theorem \ref{Exist} and Theorem \ref{ExtSph} to the $r$-balanced case.

\begin{theorem}
     \label{BalExist}

     Let $T$ be a topological space and $h \in [T,\mathbb{S}^{d-1}]$ be a homotopy class. Suppose we have a pair $(V,r)$ in $\mathbb{R}^d$ of full rank such that $r \in \text{int}(\conv(V))$. Then there exists an open cover $\mathcal{F}=\{F_1, \dots, F_m\}$ of $T$ such that $[\mathcal{F}_{(V,r)}]=h$.
\end{theorem}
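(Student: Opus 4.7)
The plan is to realize $\mathcal{F}$ as a pullback, via a map representing $h$ or $\mathrm{inv}(h)$ (the correct choice is fixed at the end), of the hemisphere cover $\mathcal{H}=\{H_{v_1},\dots,H_{v_m}\}$ of $\mathbb{S}^{d-1}$ built in the proof of Theorem~\ref{NonBalComp}. By the remark following Definition~\ref{VHomCov}, we may assume $V\subset\mathbb{S}^{d-1}$, so this cover is actually defined. Concretely, I would pick a continuous map $f:T\to\mathbb{S}^{d-1}$ in the appropriate homotopy class and set $F_i:=f^{-1}(H_{v_i})$. Since $\mathcal{H}$ covers $\mathbb{S}^{d-1}$, the family $\mathcal{F}=\{F_1,\dots,F_m\}$ is an open cover of $T$; and a nonempty intersection $\bigcap_{i\in S}F_i$ forces $\bigcap_{i\in S}H_{v_i}\neq\emptyset$, so $N(\mathcal{F})$ is a subcomplex of $N(\mathcal{H})=\mathcal{K}(V,r)$ and therefore contains no $r$-balanced simplex. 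In particular, $[\mathcal{F}_{(V,r)}]$ is well defined.

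Next I would compute $[\mathcal{F}_{(V,r)}]$ by pulling back a partition of unity. Fix $\Psi=\{\psi_1,\dots,\psi_m\}$ on $\mathbb{S}^{d-1}$ subordinate to $\mathcal{H}$; then $\Phi=\{\psi_1\circ f,\dots,\psi_m\circ f\}$ is subordinate to $\mathcal{F}$, and the defining formula immediately gives $g_{\Phi,V,r}=g_{\Psi,V,r}\circ f$. Hence $[\mathcal{F}_{(V,r)}]=[g_{\Psi,V,r}]\circ[f]$, with the first factor acting on $[T,\mathbb{S}^{d-1}]$ by post-composition. The self-map $g_{\Psi,V,r}:\mathbb{S}^{d-1}\to\mathbb{S}^{d-1}$ is exactly $\tau_r\circ\pi\circ c_\Psi$, which is the composition whose degree Lemma~\ref{PiHomEq} computes to be $\pm1$. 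Consequently $[g_{\Psi,V,r}]$ acts on $[T,\mathbb{S}^{d-1}]$ either as the identity or as the involution $\mathrm{inv}$.

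Since the action of $[g_{\Psi,V,r}]$ is its own inverse, choosing $f$ to represent $h$ in the identity case and $\mathrm{inv}(h)$ in the other case produces a cover with $[\mathcal{F}_{(V,r)}]=h$, finishing the proof. The main delicate point of the argument is the degree computation for $g_{\Psi,V,r}$, which is supplied directly by Lemma~\ref{PiHomEq}; the rest reduces to the routine formal identity $g_{\Phi,V,r}=g_{\Psi,V,r}\circ f$ coming from Definition~\ref{VHomCov} applied to the pulled-back partition. The hypotheses that $r\in\mathrm{int}(\conv(V))$ and that $(V,r)$ has full rank are used only to guarantee $\mathcal{K}(V,r)\simeq\mathbb{S}^{d-1}$ and the good-cover property of $\mathcal{H}$, which underlie both the construction and Lemma~\ref{PiHomEq}.
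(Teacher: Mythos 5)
Your argument is correct, and it is a close cousin of the paper's proof but works on a different space with a different model cover. The paper pulls back the open-star cover $\mathcal{U}_L(\mathcal{K}(V,r))$ of the geometric realization $|\mathcal{K}(V,r)|$ via a map $f:T\to |\mathcal{K}(V,r)|$ representing $h$ (under the equivalence $|\mathcal{K}(V,r)|\simeq\mathbb{S}^{d-1}$), exploiting the fact that for the barycentric partition of unity one has $c_{\Psi}=\mathrm{id}$; you instead pull back the hemisphere cover $\mathcal{H}$ of $\mathbb{S}^{d-1}$ itself via a map $f:T\to\mathbb{S}^{d-1}$, which has the same nerve $\mathcal{K}(V,r)$. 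Both routes rest on the same idea: realize $\mathcal{F}$ as a pullback of a fixed cover whose nerve is exactly $\mathcal{K}(V,r)$, so that no $r$-balanced simplex can appear, and then track the homotopy class through the pulled-back partition of unity. Your version buys two things: (i) the map $f$ lands directly in $\mathbb{S}^{d-1}$, so you never need to transport $h$ across the identification $[T,|\mathcal{K}(V,r)|]\cong[T,\mathbb{S}^{d-1}]$, and the degree computation for $g_{\Psi,V,r}=\tau_r\circ\pi\circ c_\Psi$ is read off from Lemma~\ref{PiHomEq} with no further work; (ii) you handle the $\deg=\pm1$ ambiguity explicitly by precomposing with a representative of $h$ or of $\mathrm{inv}(h)$, whereas the paper's write-up slides over this with the phrase ``up to involution'' and then asserts equality with $h$ anyway. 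The paper's version, in turn, buys a slightly slicker identity $[\iota\circ c_{\Phi}]=[f]$ coming from $c_{\Psi}=\mathrm{id}$ on $|\mathcal{K}(V,r)|$. Both are sound; yours is a bit more self-contained on the involution point.
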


\begin{proof}
  From Theorem \ref{IndTh} we know that $[\mathcal{F}_{(V,r)}]=[\iota \circ c_{\Phi}]$ (up to involution) and from Theorem \ref{NonBalComp} we know that $\mathcal{K}(V,r) \simeq S^{d-1}$. So it is sufficient to prove the theorem by constructing a cover $\mathcal{F}$ such that $[\iota \circ c_{\Phi}]=h \in [T, |\mathcal{K}(V,r)|]$ for some partition of unity $\Phi$. 

  Fix a coloring $ L(v_i) = i $ for $ v_i \in Vert(\mathcal{K}(V,r)) $ and consider the open cover $\mathcal{U}_L(\mathcal{K}(V,r))$ of the $|\mathcal{K}(V,r)|$ where $ \mathcal{U}_L(\mathcal{K}(V,r)) = \{ U_i(\mathcal{K}(V,r)) \}_{i=1}^{m} $. Note that the nerve of this cover is exactly $\mathcal{K}_{(V,r)}$; the homotopy class of this cover relative to $(V,r)$ corresponds to mapping degree $\pm1$ and for a partition of unity $\Psi$ subordinate to this cover we have $[\iota \circ c_\Psi ]=1 \in [|\mathcal{K}(V,r)|, |\mathcal{K}(V,r)|]$. Moreover,we can choose $\Psi$ such that $c_{\Psi}=id$.

Let $f: T \rightarrow |\mathcal{K}(V,r)|$ be a map such that $[f]=h$ in $[T,|\mathcal{K}(V,r)|]$. Define a cover of the space $T$ as $F_i={f}^{-1}(U_i(\mathcal{K}(V,r)))$. This cover is exactly what we need: the homotopy class of $\mathcal{F}=\{F_1, \dots, F_m\}$ relative to $(V,r)$ is exactly $[h]$. To see this, note that $\Phi=\Psi \circ \hat{f}$ is a partition of unity subordinate to $\mathcal{F}$. It is clear then that $[\mathcal{F}_{(V,r)}]=[\iota \circ c_{\Phi}]=[f]=h$. 

If $T$ is a simplicial complex, then by the simplicial approximation theorem we choose a triangulation $K$ of the space $T$ and a piecewise-linear map $g: K \rightarrow |\mathcal{K}(V,r)|$ such that $g$ is homotopic to $\hat{f}$ and $[g]=h$. For all $u \in Vert(K)$, we define $L(u)=i$ if $g(u)=v_i$. Then $[L_{(V,r)}]=h$ (up to involution).
  
\end{proof}

Now we have made the necessary preparations to prove Theorem \ref{VExtSph}.
\begin{proof}
The proof largely repeats the proof of Theorem \ref{ExtSph} from \cite{MusH}, but contains significant additions.

If $\mathcal{S}$ can be extended to such $\mathcal{F}$, then we have $g_{S,V,r}: S^{k} \rightarrow S^{d-1}$ and $g_{F,V,r}: B^{k+1} \rightarrow S^{d-1}$. Since $g_{S,V,r} = g_{F,V,r} \circ in$, where $in : {S}^{k} \rightarrow {B}^{k+1}$ is a homotopically trivial map, we have $[\mathcal{S}_{(V,r)}] = [g_{S,V,r}] = 0$.

    Now we will show that if $[\mathcal{S}_{(V,r)}]=0$ then $\mathcal{S}$ can be extended to $\mathcal{F}$. From Lemma \ref{ClosedLemma} it is clear that we can consider only the case of open covers. Choose a partition of unity $\Phi=\{\varphi_1, \dots, \varphi_m\}$. Note that if $[\mathcal{S}_{(V,r)}]=0$ then $[\iota \circ c_{\Phi}]=0$ in $[S^{k}, |\mathcal{K}(V,r)|]$. Then we have a homotopy: $$ \tilde{H}: {S}^{k} \times [0,1] \rightarrow |\mathcal{K}(V,r)| $$ where $ \tilde{H}(x,0) =\iota \circ c_{\Phi}$ and $ \tilde{H}(x,1) = v_1 $ for some vertex $v_1$ from $\mathcal{K}(V,r)$. As before, consider the cover $ \mathcal{U}_L(\mathcal{K}(V,r)) = \{ U_i(\mathcal{K}(V,r)) \}_{i=1}^{m} $. Let $ D = {S}^{k} \times [0,1] $. Then we define 
$ U_{i}(\Phi,D) = \tilde{H}^{-1}(U_i(\mathcal{K}(V,r))), ~~~~ \mathcal{U}_L(\Phi,D) = \{ U_i(\Phi,D) \}_{i=1}^{m}. $

It is clear that $ \mathcal{U}_L(\Phi,D) $ is an open cover of the space $ D $. However, it is not an extension of the cover $ \mathcal{S} $, since the restriction $  \mathcal{U}_L(\Phi,D)|_{S^k \times \{0\}} $ consists of sets 
$$ U_i(\Phi,D)|_{{S}^{k}\times \{0\}} = \{ x \in {S}^{k} : \phi_i(x) > 0 \} \subset S_i. $$

It remains only to extend $ U_i(\Phi,D)|_{{S}^{k}\times \{0\}} $ to $ S_i $. We will do this as follows: let $ \Pi(\mathcal{S}) $ be the set of all partitions of unity subordinate to $ \mathcal{S} $. Then 
$ S_i = \bigcup_{\Phi \in \Pi(\mathcal{S})} U_i(\Phi,D)|_{{S}^{k} \times \{0\}}. $

We denote 
$$ W_i = \bigcup_{\Phi \in \Pi(\mathcal{S})} U_i(\Phi,D). $$

Then $ \mathcal{W} = { W_1, \dots, W_m } $ will be a cover of $ D $ that extends $ \mathcal{S} $. The boundary of $ D $ consists of two spheres, $ D_0 = {S}^{k} \times \{ 0 \} $ and $ D_1 = {S}^{k} \times \{ 1 \} $. The cover $ \mathcal{W} $ on $ D_0 $ coincides with $ \mathcal{S} $, while on $ D_1 $, it is covered by a single set $ W_1 $. Let $ Z $ be a $ k+1 $-dimensional disk with boundary $ D_1 $, and let $ F_1 = W_1 \cup Z $. We consider $ B = D \cup Z $. It is clear that this is a $ k+1 $-dimensional disk. We also note that $ \mathcal{F} = { F_1, W_2, \dots, W_m} $ is a cover of $ B $ that extends $ \mathcal{S} $ and such that $ N(\mathcal{F}) $ does not contain $ r $-balanced simplices.

\end{proof}

Here we proof the mapping degree version of the Theorem \ref{VExtMan}.

\begin{proof}
    From the Hopf degree theorem a map $f: \partial M \rightarrow \mathbb{S}^{d-1}$ can be extended to $F: M \rightarrow \mathbb{S}^{d-1}$ with $F|_{\partial M}=f$ if and only if $deg(f)=0$.Then if the cover $\mathcal{S}$ can be extended to such a cover $\mathcal{F}$ of $M$ that there is no $r$-balanced simplices in $N(\mathcal{F})$ then $deg(\mathcal{S}_{(V,r)})=0$. 

    \medskip

    If $deg(\mathcal{S}_{(V,r)})=0$ then the argumant is the same as in the proof of Theorem \ref{VExtSph}. From the Collar theorem there is an $\varepsilon$ neighborhood $C\subset M$ of $\partial M$ such that $C$ is homeomorphic to   $\partial M \times [0,1]=D$. Let  $F_1=W_1 \cup (M \setminus D)$ and $\mathcal{F}=\{F_1, W_2, \dots,  W_m\}$ is the desired cover of $M$.
\end{proof}

\section{Covers of Euclidean space}

Suppose we have a closed (or open) cover $\mathcal{F}=\{F_1, \dots, F_m\}$ of $\mathbb{R}^n$. Let ${f: B^{k} \rightarrow \mathbb{R}^n}$ be a map from a ball to Euclidean space. Define the induced cover $f^*\mathcal{F}$ on $B^{k}$ as ${x \in f^*F_i \iff f(x) \in F_i}$. 

\begin{definition}
     Let  $\mathcal{F}=\{F_1, \dots, F_m\}$ be a closed (or open) cover of $\mathbb{R}^n$, let ${V=\{v_1,\dots,v_m\}}$ be a subset in $\mathbb{R}^d$ of rank $d$ and let $r$ be a point in $\conv(V)$. Then the cover $\mathcal{F}$ is called homotopically non-trivial (with respect to $(V,r)$), if there exists $f:B^{k} \rightarrow \mathbb{R}^n$ such that $[(f^*\mathcal{F}|_{\partial B^{k}})_{(V,r)}] \neq 0$ or $[(f^*\mathcal{F}|_{\partial B^{k}})_{(V,r)}] =N/A$. 
\end{definition}

The following theorem is a consequence of Theorem \ref{VExtSph}:
\begin{theorem}
\label{NTC}
The closed (or open) cover $\mathcal{F}$ of $\mathbb{R}^n$ is homotopically non-trivial (with respect to $(V,r)$)  if there exists $S \in \bs(V,r)$ such that $\bigcap_{v_i \in S}F_i \neq \emptyset$. 
\end{theorem}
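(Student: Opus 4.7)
The plan is to exhibit a single witness map $f$ that triggers the definition of homotopic non-triviality. Given a balanced subset $S \in \bs(V,r)$ with $\bigcap_{v_i \in S} F_i \neq \emptyset$, I would pick any point $p$ in this intersection, fix some $k \geq 1$, and take $f : B^{k} \to \mathbb{R}^{n}$ to be the constant map $f \equiv p$.

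Next I would compute the induced cover. For each $i$, the pullback $f^{*}F_{i} = f^{-1}(F_{i})$ equals $B^{k}$ when $p \in F_{i}$ and is empty otherwise, and the same holds after restriction to $\partial B^{k}$. Since $\mathcal{F}$ covers $\mathbb{R}^{n}$, at least one $F_{i}$ contains $p$, so $f^{*}\mathcal{F}|_{\partial B^{k}}$ is a genuine cover of $\partial B^{k}$. For every $i \in S$ the hypothesis gives $p \in F_{i}$, hence $f^{*}F_{i}|_{\partial B^{k}} = \partial B^{k}$, and therefore
\[
\bigcap_{i \in S} f^{*}F_{i}\big|_{\partial B^{k}} \;=\; \partial B^{k} \;\neq\; \emptyset .
\]
Thus $S$ is a simplex of the nerve $N(f^{*}\mathcal{F}|_{\partial B^{k}})$, and since $S$ is $r$-balanced by hypothesis, the nerve contains an $r$-balanced simplex. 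By Definition~\ref{VHomCov} this forces $[(f^{*}\mathcal{F}|_{\partial B^{k}})_{(V,r)}] = N/A$, which is precisely one of the two alternatives making $\mathcal{F}$ homotopically non-trivial with respect to $(V,r)$.

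There is essentially no obstacle here: once the witness $f$ is chosen, everything is an unwinding of definitions. The excerpt's remark that the theorem is a consequence of Theorem~\ref{VExtSph} seems aimed at a more refined variant — one could instead take $f$ to embed a small sphere around $p$ into $\mathbb{R}^{n}$, hope that the resulting pullback on $\partial B^{k}$ has no $r$-balanced simplex, and then invoke Theorem~\ref{VExtSph} to force its class to be non-zero because the interior pullback on $B^{k}$ carries the $r$-balanced simplex $S$ and would otherwise provide a forbidden extension. The harder point in that route would be arranging that the pullback to the small sphere contains no balanced simplex; the constant-map witness above sidesteps that issue and already settles the statement as stated.
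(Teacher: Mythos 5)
Your argument for the stated ``if'' direction is correct and is exactly the paper's: take the constant map $f \equiv p$ to a point $p$ in the balanced intersection, observe that the pulled-back nerve on $\partial B^k$ then contains the $r$-balanced simplex $S$, so $[(f^*\mathcal{F}|_{\partial B^k})_{(V,r)}] = N/A$. One small correction to your closing remark: the paper's invocation of Theorem~\ref{VExtSph} is not a more refined route to the same direction, but is used to prove the \emph{converse} implication --- if some $f$ makes the restricted class nonzero, then by Theorem~\ref{VExtSph} the cover $f^*\mathcal{F}$ of $B^k$ must contain an $r$-balanced intersection, hence so does $\mathcal{F}$ --- so the paper's proof actually establishes an equivalence, even though the theorem is phrased as a one-way implication.
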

\begin{proof}
\begin{enumerate}
    \item Suppose there exists $S \in \bs(V,r)$  and a point $x \in \bigcap_{v_i \in S}F_i $. Define ${f: B^k \rightarrow x}$. Then there is balanced intersection on the boundary $\partial B^{k}$ and $[(f^*\mathcal{F}|_{\partial B^{k}})_{(V,r)}] = N/A$.
    \item In the case when there is $f: B^k \rightarrow \mathbb{R}^n$ such that $[(f^*\mathcal{F}|_{\partial B^{k}})_{(V,r)}] = N/A$ we already have a balanced intersection on the boundary of $B^k$. Assume $[(f^*\mathcal{F}|_{\partial B^{k}})_{(V,r)}] \neq 0$. Then from Theorem \ref{VExtSph} it follows that there is $S \in \bs(V,r)$ such that $\bigcap_{v_i \in S}f^*F_i \neq \emptyset $ and hence $\bigcap_{v_i \in S}F_i \neq \emptyset$.
\end{enumerate}

\end{proof}

\medskip 

For a closed cover of $\mathbb{R}^n$, we want to establish a connection between balanced intersections and zeros of a vector field. Using this analogy, we define an index of an isolated component of a balanced intersection in the case $n=d$.

To define the index rigorously, we restrict ourselves to the special case of closed covers in which, for any $S \in \bs(V,r)$, the intersection $\bigcap_{v_i \in S} F_i$ consists of isolated and compact connected components. We call such a cover $\mathcal{F}$ \textit{isolated}, and throughout this section we consider only isolated covers.

\medskip

Denote by $\bs(\mathcal{F},V,r)$ the set of all balanced intersections, i.e.
\[
\bs(\mathcal{F},V,r) \;=\; \bigcup_{S \in \bs(V,r)} \left(\bigcap_{v_i \in S} F_i\right).
\]
Since for any closed ball $B^n \subset \mathbb{R}^n$ there are only finitely many connected components of $\left(\bigcap_{v_i \in S} F_i\right) \cap B^n$, it follows that $\bs(\mathcal{F},V,r)$ also consists of isolated compact connected components.

\medskip
Let $C$ be a connected component of $\bs(\mathcal{F},V,r)$ for a closed cover $\mathcal{F}$, and let $C_{\varepsilon}$ denote its $\varepsilon$-neighborhood. There exists a number $r>0$ such that for every $\varepsilon<r$ we have 
\[
\partial C_{\varepsilon} \cap \bs(\mathcal{F},V,r)=\emptyset.
\] 
We will refer to such values of $\varepsilon$ simply as \emph{small enough $\varepsilon$}. The index of a component $C$ is then defined as the mapping degree of the cover induced on the boundary $\partial C_{\varepsilon}$. 

However, two technical issues arise:
\begin{enumerate}
    \item $\partial C_{\varepsilon}$ may fail to be a manifold, for instance when $C$ is a wild compact fractal.
    \item It is not immediately clear why the degree is independent of the choice of small enough $\varepsilon$.
\end{enumerate}

In what follows we address these technical problems.
The first issue can be resolved as follows:

\begin{claim}
    For every $\varepsilon > 0$ there exists a closed $n$-manifold $M \subset \mathbb{R}^n$ with smooth boundary $\partial M$ such that $C \subset M$ and $M \subset C_{\varepsilon}$. 
\end{claim}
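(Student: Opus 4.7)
The plan is to realize $M$ as a smooth sublevel set of a smoothed distance function to $C$, so that Sard's theorem hands us a regular value and hence a smooth boundary hypersurface. The key point is that $C$ may be topologically wild, so one cannot directly thicken it by a tubular neighborhood; however, any compact set admits arbitrarily thin smooth neighborhoods carved out by generic level sets of a mollified distance function, and this is all the claim really demands.

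Concretely, consider the distance function $d_C(x) = \inf_{y \in C} \|x - y\|$, which is $1$-Lipschitz, nonnegative, vanishes exactly on $C$, and satisfies $d_C^{-1}([0,\varepsilon)) = C_{\varepsilon}$. Fix $\delta$ with $0 < \delta < \varepsilon/4$ and a smooth nonnegative bump function $\eta$ on $\mathbb{R}^n$ supported in the closed ball of radius $\delta$ with $\int \eta = 1$, and set $\tilde d = d_C * \eta$. Then $\tilde d$ is $C^\infty$, and using $1$-Lipschitzness of $d_C$,
\[
|\tilde d(x) - d_C(x)| \;\leq\; \int |d_C(x-y) - d_C(x)|\,\eta(y)\,dy \;\leq\; \delta
\]
for every $x \in \mathbb{R}^n$. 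By Sard's theorem applied to $\tilde d$, the critical values of $\tilde d$ form a set of Lebesgue measure zero, so the interval $(\delta,\,\varepsilon - \delta)$ contains a regular value $t$. Define
\[
M \;:=\; \{\, x \in \mathbb{R}^n \mid \tilde d(x) \leq t \,\}.
\]
Since $t$ is a regular value, $\partial M = \tilde d^{-1}(t)$ is a smooth codimension-one submanifold of $\mathbb{R}^n$, and $M$ is an $n$-manifold with smooth boundary.

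For the required inclusions: if $x \in C$ then $d_C(x) = 0$, so $\tilde d(x) \leq \delta < t$, giving $C \subset M$; conversely, if $x \in M$ then $d_C(x) \leq \tilde d(x) + \delta \leq t + \delta < \varepsilon$, so $M \subset C_{\varepsilon}$. Compactness of $C$ makes $C_\varepsilon$ bounded, hence $M$ is bounded, and it is closed in $\mathbb{R}^n$ as the sublevel set of a continuous function, so $M$ is compact. The only genuine subtlety is producing a regular value of $\tilde d$ inside the narrow window $(\delta,\varepsilon-\delta)$, and this is immediate from Sard's theorem once the mollification scale $\delta$ has been chosen small relative to $\varepsilon$; notably, no regularity of $\partial C_{\varepsilon}$ itself is used, so any wildness of $C$ is absorbed harmlessly into the smoothing.
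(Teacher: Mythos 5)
Your proof is correct and follows essentially the same route as the paper: mollify the distance function $d_C$, invoke Sard's theorem to find a regular value in the appropriate window, and take the corresponding sublevel set. The only difference is cosmetic — you make the mollification explicit and verify compactness, whereas the paper just asserts the existence of a smooth approximation and uses slightly different numerical constants.
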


\begin{proof}
    Consider the distance function $d: \mathbb{R}^n \to \mathbb{R}$ given by $d(x) = \mathrm{dist}(x, C)$. Let $\hat{d} : \mathbb{R}^n \to \mathbb{R}$ be a smooth approximation such that $|\hat{d}(x) - d(x)| < \tfrac{\varepsilon}{1000}$ for every $x \in \mathbb{R}^n$. Choose a regular value $\varepsilon/4 < \varepsilon_1 < \varepsilon/2$ and set 
    \[
    M = \hat{d}^{-1}((-\infty, \varepsilon_1]).
    \]
    Then $M \subset \mathbb{R}^n$ is a closed $n$-manifold with smooth boundary $\partial M = \hat{d}^{-1}(\varepsilon_1)$. For every $x \in C$ we have $\hat{d}(x) < \varepsilon/1000$, so $C \subset M$. For any $x \in M$ we have 
    \[
    d(x,C) < |\hat{d}(x)| + \tfrac{\varepsilon}{1000} < \varepsilon,
    \] 
    and therefore $M \subset C_{\varepsilon}$.
\end{proof}

The second issue is resolved by the following:

\begin{claim}
    Suppose there are two closed $n$-manifolds with smooth boundary $M_1$ and $M_2$ such that 
    \[
    C \subset M_1 \subset C_{\varepsilon_1} \subset M_2 \subset C_{\varepsilon_2}.
    \] 
    Then 
    \[
    \deg\!\left((\mathcal{F}|_{\partial M_1})_{(V,r)}\right) 
    = \deg\!\left((\mathcal{F}|_{\partial M_2})_{(V,r)}\right).
    \]
\end{claim}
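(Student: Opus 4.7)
The plan is to cobord the two boundaries: set $N := M_2 \setminus \mathrm{int}(M_1)$ and show that this compact $d$-dimensional manifold (recall $n=d$ here) realizes a cover-extension between $\partial M_1$ and $\partial M_2$ with no $r$-balanced intersections, so that Theorem~\ref{VExtMan} forces the two boundary degrees to cancel.

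First I would verify that $\mathcal{F}|_N$ has no $r$-balanced simplices in its nerve. Since we are working with small enough $\varepsilon_1,\varepsilon_2$, by definition of ``small enough'' the only component of $\bs(\mathcal{F},V,r)$ meeting $C_{\varepsilon_2}$ is $C$ itself (otherwise some $\partial C_\varepsilon$ would intersect $\bs(\mathcal{F},V,r)$). Because $C \subset M_1$, we have $N \subset C_{\varepsilon_2} \setminus C$, and hence for every $r$-balanced $S \in \bs(V,r)$ the intersection $\bigcap_{v_i \in S} F_i$ is disjoint from $N$. Thus $\mathcal{F}|_N$ contains no $r$-balanced intersections.

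Next, $N$ is a compact oriented $d$-manifold with boundary $\partial N = \partial M_2 \sqcup \partial M_1$, where $\partial M_1$ inherits the orientation opposite to its orientation as $\partial M_1$ (since $M_1$ is removed from $M_2$). The cover $\mathcal{F}|_N$ extends the boundary cover $\mathcal{F}|_{\partial N}$, and by the previous paragraph this extension has no $r$-balanced simplices in its nerve. Applying Theorem~\ref{VExtMan} to the manifold $N$ yields
\[
\deg\!\bigl((\mathcal{F}|_{\partial N})_{(V,r)}\bigr) = 0.
\]
By additivity of the mapping degree over connected components of the boundary (and the orientation reversal on $\partial M_1$ noted above),
\[
\deg\!\bigl((\mathcal{F}|_{\partial N})_{(V,r)}\bigr)
= \deg\!\bigl((\mathcal{F}|_{\partial M_2})_{(V,r)}\bigr) - \deg\!\bigl((\mathcal{F}|_{\partial M_1})_{(V,r)}\bigr),
\]
so the two boundary degrees coincide, as required.

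The main obstacle is the bookkeeping around ``no balanced intersections inside $N$.'' One has to invoke exactly the ``small enough $\varepsilon$'' condition recalled just before the claim, and verify that the hypothesis $C \subset M_1 \subset C_{\varepsilon_1} \subset M_2 \subset C_{\varepsilon_2}$ really does sandwich $N$ between $C$ and a level set where $\bs(\mathcal{F},V,r)$ has been exhausted by $C$ alone. The orientation-matching of $\partial M_1$ as an inner boundary of $N$ versus as the outer boundary of $M_1$ is the other place where a sign could be mishandled, but it is standard once $N$ is endowed with the orientation induced from $M_2$.
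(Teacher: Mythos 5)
Your proof is correct and follows essentially the same approach as the paper: form the cobordism $N = M_2 \setminus \mathrm{int}(M_1)$, use the ``small enough $\varepsilon$'' condition to rule out balanced intersections in $N$, note the orientation reversal on the inner boundary $\partial M_1$, and apply Theorem~\ref{VExtMan} to conclude the two boundary degrees agree.
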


\begin{proof}
    Consider 
    \[
    N = (M_2 \setminus M_1) \cup \partial M_1.
    \] 
    Then $N$ is a closed $n$-manifold with smooth boundary $\partial N = \partial M_1 \cup \partial M_2$. Since $C_{\varepsilon}$ contains no balanced intersections other than $C$, and $C \cap N = \emptyset$, it follows that there are no balanced intersections in $N$. Moreover, $\partial M_1$ inherits the opposite orientation in $N$. By Theorem~\ref{VExtMan} we obtain
    \[
    \deg\!\left((\mathcal{F}|_{\partial M_1^{-}})_{(V,r)}\right)
    + \deg\!\left((\mathcal{F}|_{\partial M_2})_{(V,r)}\right) = 0,
    \]
    and therefore
    \[
    \deg\!\left((\mathcal{F}|_{\partial M_1})_{(V,r)}\right)
    = \deg\!\left((\mathcal{F}|_{\partial M_2})_{(V,r)}\right).
    \]
\end{proof}

Denote by $\text{ind}(C_{(V,r)})$ the degree of the cover $\mathcal{F}|_{\partial M}$ for any closed $n$-manifold with smooth boundary $M \subset \mathbb{R}^n$ such that  \(C \subset M \subset C_{\varepsilon}\) for small enough $\varepsilon$.

Let $\mathrm{Conn}(\bs(\mathcal{F},V,r))$ denote the family of connected components of $\bs(\mathcal{F},V,r)$. With these definitions in place, we can state the following:

\begin{theorem}
    Let $\mathcal{F}$ be a closed isolated cover of $\mathbb{R}^n$, and let $V \subset \mathbb{R}^n$ be a set of points of rank $n$ with $r \in \conv(V)$. Let $M \subset \mathbb{R}^n$ be a closed $n$-manifold with smooth boundary. Suppose 
    \[
    \deg\!\left((\mathcal{F}|_{\partial M})_{(V,r)}\right) = k.
    \] 
    Then 
    \[
    \sum_{\substack{C \subset M \\ C \in \mathrm{Con}(\bs(\mathcal{F},V,r))}} \text{ind}(C_{(V,r)}) \;=\; k.
    \]
\end{theorem}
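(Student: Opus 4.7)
The plan is to mimic the classical Poincaré--Hopf argument: cut out small smooth manifold neighborhoods around each balanced component, and apply Theorem~\ref{VExtMan} to the complement, where no balanced intersections remain.

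First I would use the isolated hypothesis together with compactness of $M$ to conclude that the set $\{C \in \mathrm{Conn}(\bs(\mathcal{F},V,r)) : C \subset M\}$ is finite; call its elements $C_1,\dots,C_s$. For each $C_i$ I would choose $\varepsilon_i>0$ small enough that the neighborhoods $C_{i,\varepsilon_i}$ are pairwise disjoint, lie in the interior of $M$, and meet $\bs(\mathcal{F},V,r)$ only in $C_i$ itself; this is possible because the components are isolated and compact. Then, invoking the first claim preceding the theorem, I would pick closed $n$-manifolds $M_i$ with smooth boundary such that $C_i \subset M_i \subset C_{i,\varepsilon_i}$.

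Next I would set
\[
N \;=\; M \setminus \bigcup_{i=1}^{s} \mathrm{int}(M_i).
\]
Then $N$ is a compact $n$-manifold with smooth boundary
\[
\partial N \;=\; \partial M \;\sqcup\; \bigsqcup_{i=1}^{s} \partial M_i^{-},
\]
where $\partial M_i^{-}$ denotes $\partial M_i$ with the orientation reversed relative to its role as boundary of $M_i$. By construction $N$ contains no balanced intersections, so the restriction $\mathcal{F}|_N$ is an extension of $\mathcal{F}|_{\partial N}$ whose nerve has no $r$-balanced simplices. Applying the easy direction of Theorem~\ref{VExtMan} to $\mathcal{F}|_{\partial N}$ gives $\deg((\mathcal{F}|_{\partial N})_{(V,r)}) = 0$.

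Finally I would use additivity of the mapping degree over disjoint boundary components, together with the sign change from the reversed orientation, to obtain
\[
\deg\!\left((\mathcal{F}|_{\partial M})_{(V,r)}\right) \;-\; \sum_{i=1}^{s} \deg\!\left((\mathcal{F}|_{\partial M_i})_{(V,r)}\right) \;=\; 0,
\]
and then recognize each summand $\deg((\mathcal{F}|_{\partial M_i})_{(V,r)})$ as $\mathrm{ind}(C_{i,(V,r)})$ by the definition of the index (which was already shown to be independent of the specific smooth neighborhood chosen, by the second claim preceding the theorem). The main technical obstacle I expect is ensuring that $N$ is genuinely a smooth manifold with the stated boundary and that no balanced components touch $\partial M$; the former follows from transversality of the regular-value construction of each $\partial M_i$ and the assumption that $\partial M$ is smooth with $M_i \subset \mathrm{int}(M)$, while the latter can be arranged by shrinking $\varepsilon_i$ if some $C_i$ accidentally meets $\partial M$, or handled by a small smooth isotopy of $\partial M$ away from the (finitely many) balanced components it might touch, without changing the degree on $\partial M$.
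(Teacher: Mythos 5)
Your proof is correct and follows essentially the same route as the paper: excise small smooth manifold neighborhoods $M_i$ of the finitely many balanced components inside $M$, apply the ``easy'' direction of Theorem~\ref{VExtMan} to the resulting manifold $N = M \setminus \bigcup \mathrm{int}(M_i)$ (on which $\mathcal{F}$ has no balanced intersections), and then use additivity of degree over the boundary components together with the definition of the index. You supply more of the bookkeeping than the paper does --- verifying finiteness of components inside $M$, choosing the $\varepsilon_i$ so the neighborhoods are pairwise disjoint and avoid $\partial M$, and checking that $N$ is a smooth manifold --- and this is all reasonable. One small remark: your worry at the end about balanced components touching $\partial M$ and the proposed isotopy fix are unnecessary, because the hypothesis that $\deg\!\left((\mathcal{F}|_{\partial M})_{(V,r)}\right)$ is defined already forces $N(\mathcal{F}|_{\partial M})$ to contain no $r$-balanced simplices, hence $\partial M$ is disjoint from $\bs(\mathcal{F},V,r)$ to begin with.
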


\begin{proof}
    The proof follows a standard argument. For each connected component $C$, remove the corresponding approximating manifold $M_{C}$ from $M$: 
    \[
    M' = M \setminus \bigcup M_C.
    \] 
    Then $M'$ is an $n$-dimensional manifold with smooth boundary such that $\mathcal{F}$ has no balanced intersections in $M'$. The cover $\mathcal{F}$ extends $\mathcal{F}|_{\partial M'}$ from the boundary to the whole $M'$, and by Theorem~\ref{VExtMan} it follows that 
    \[
    \deg\!\left((\mathcal{F}|_{\partial M'})_{(V,r)}\right) = 0.
    \] 
    On the other hand,
    \[
    \deg\!\left((\mathcal{F}|_{\partial M'})_{(V,r)}\right) 
    = \deg\!\left((\mathcal{F}|_{\partial M})_{(V,r)}\right) 
      - \sum \text{ind}(C_{(V,r)}).
    \] 
    Combining these two equalities gives the result.
\end{proof}

Note that the absolute value of the index $\text{ind}(C_{(V,r)})$ is the same for $\bs$-equivalent pairs $(V,r)$.


\section{KKMS and Sperner's lemmas as consequences of Theorem \ref{VExtSph}}

In the works \cite{MusH} and \cite{MusBC}, various corollaries were obtained from Theorem \ref{ExtSph}, for example (\cite{MusH}, Theorem 3.1), (\cite{MusH}, Cor. 3.2), and (\cite{MusBC}, Th. 4.2), which generalize classical discrete fixed point theorems. We will present the corollaries of these theorems here.

\medskip

\begin{theorem}
\label{A}
    Let $(V,r)$ be a pair in $\mathbb{R}^d$ of full rank such that $r$ is the mass center of $V$. Let $\mathcal{F}=\{F_1, \dots, F_m\}$ be a closed (or open) cover of an $n$-dimensional ball $B^n$ such that $[\mathcal{F}_{(V,r)}]$ is not homotopic to zero on the boundary. Then there exists a balanced set  $S\in\bs(V,r)$ such that the intersection $\bigcap_{v_i \in S} F_i$ is non-empty.
\end{theorem}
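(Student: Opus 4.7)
The plan is to argue by contradiction and reduce to a direct application of Theorem \ref{VExtSph}. Suppose, toward a contradiction, that for every $S\in\bs(V,r)$ the intersection $\bigcap_{v_i\in S}F_i$ is empty. This is precisely the statement that the nerve $N(\mathcal{F})$ contains no $r$-balanced simplex. Let $\mathcal{S}=\mathcal{F}|_{\partial B^n}=\{F_1\cap\partial B^n,\dots,F_m\cap\partial B^n\}$ be the cover induced on the boundary sphere. Since $N(\mathcal{S})$ is a subcomplex of $N(\mathcal{F})$, it also contains no $r$-balanced simplex, and the homotopy class $[\mathcal{S}_{(V,r)}]\in\pi_{n-1}(S^{d-1})$ (which is what the statement calls ``$[\mathcal{F}_{(V,r)}]$ on the boundary'') is therefore well-defined.

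The cover $\mathcal{F}$ is, by definition (Definition \ref{ExtDef}), an extension of $\mathcal{S}$ from $\partial B^n$ to $B^n$, and this extension carries no $r$-balanced simplices in its nerve. Applying the ``if'' direction of Theorem \ref{VExtSph} (with $k=n-1$) then yields $[\mathcal{S}_{(V,r)}]=0$ in $\pi_{n-1}(S^{d-1})$. This contradicts the hypothesis that $[\mathcal{F}_{(V,r)}]$ is not null-homotopic on the boundary, completing the proof.

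Two small points deserve checking. First, the assumption that $r$ is the mass center of $V$ and that $V$ has full rank $d$ is used only to guarantee $r\in\mathrm{int}(\conv(V))$, so that Theorems \ref{NonBalComp} and \ref{VExtSph} apply in the nondegenerate form where $|\mathcal{K}(V,r)|\simeq S^{d-1}$ and the homotopy class lands in $\pi_{n-1}(S^{d-1})$. Second, the statement allows both closed and open covers; in the closed case we invoke Lemma \ref{ClosedLemma} to replace $\mathcal{F}$ by a containing open cover with the isomorphic nerve, at which point the same argument applies verbatim.

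There is no real obstacle here: the content of the theorem is essentially the contrapositive of Theorem \ref{VExtSph} applied to the pair $(\partial B^n\subset B^n)$, and the only thing to verify carefully is that the hypothesis ``no balanced intersection'' passes from the full ball cover to its boundary restriction, which is immediate from $N(\mathcal{S})\subset N(\mathcal{F})$.
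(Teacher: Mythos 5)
Your proof is correct and is essentially what the paper means when it states (without elaboration) that Theorem~\ref{A} is a ``direct consequence of Theorem~\ref{VExtSph}'': assume no balanced intersection, note that the full ball cover is then an extension (in the sense of Definition~\ref{ExtDef}) of its boundary restriction with no $r$-balanced simplices, and apply the ``extension exists $\Rightarrow$ class is zero'' direction of Theorem~\ref{VExtSph} to get a contradiction. One minor quibble: you call this the ``if'' direction of Theorem~\ref{VExtSph}, but it is the ``only if'' direction (extension exists only if the class vanishes); this does not affect the substance of the argument.
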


\medskip 

\begin{theorem}
\label{B}
  Let $(V,r)$ be a pair in $\mathbb{R}^d$ of full rank such that $r$ is the mass center of $V$. Let $T$ be a triangulation of an $n$-dimensional ball and let there be a coloring ${L:V(T)\to \{1,...,m\}}$ such that $[L_{(V, r)}]$ is not null-homotopic on the boundary. Then there exists a simplex $\sigma$ in $T$ and a set of indices $I\subset [m]$ such that $\{v_i\}_{i \in I}\in\bs(V)$ such that the vertices of $\sigma$ are colored with all colors from $I$. In other words, $\sigma $ is $r$-balanced simplex.
\end{theorem}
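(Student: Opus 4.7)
The plan is to derive Theorem~\ref{B} from Theorem~\ref{VExtSph} by a contradiction argument. Assume that no simplex of $T$ is $r$-balanced. I would then convert the coloring $L$ into the associated open star cover $\mathcal{U}_L(T) = \{U_1, \dots, U_m\}$ of $|T| = B^n$, where $U_i = \bigcup_{L(u)=i}\mathrm{St}(u)$, as constructed in Section~2. Its nerve $N(\mathcal{U}_L(T))$ is isomorphic to the abstract simplicial complex of color patterns of simplices of $T$, so the contradictory assumption translates directly into: $N(\mathcal{U}_L(T))$ contains no $r$-balanced simplex.

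Restricting to the boundary yields $\mathcal{S} := \mathcal{U}_L(T)|_{\partial B^n}$, an open cover of $\partial B^n \cong \mathbb{S}^{n-1}$ whose nerve is a subcomplex of $N(\mathcal{U}_L(T))$ and therefore also contains no $r$-balanced simplex. By the balanced analogue of the identification $[\mathcal{U}_L(K)] = [L]$ noted after Definition~\ref{HomCov}, one has $[\mathcal{S}_{(V,r)}] = [L_{(V,r)}]$ viewed as a class on $\partial B^n$, which by hypothesis is nontrivial in $\pi_{n-1}(\mathbb{S}^{d-1})$. On the other hand, $\mathcal{U}_L(T)$ itself extends $\mathcal{S}$ from $\partial B^n$ to the whole ball $B^n$ without introducing any $r$-balanced simplex in the nerve. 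Applying Theorem~\ref{VExtSph} with $k = n-1$, the existence of such an extension forces $[\mathcal{S}_{(V,r)}] = 0$ in $\pi_{n-1}(\mathbb{S}^{d-1})$, a contradiction. Hence some simplex of $T$ must be colored with the indices of an $r$-balanced subset of $V$, which is precisely the claim.

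The main obstacle I expect is the compatibility identification $[(\mathcal{U}_L(T)|_{\partial B^n})_{(V,r)}] = [L_{(V,r)}]$ on $\partial B^n$, i.e.\ that the boundary open-star cover realizes the homotopy class of the boundary coloring. This is a direct transcription to the balanced setting of the standard remark in Section~2 and follows by the usual partition-of-unity argument subordinate to the star cover, but it does need to be checked carefully using the piecewise-linear map $f_L : |T| \to P(V,r)$ of Definition~\ref{VHomCov} in place of the canonical map into $\partial\Delta^{m-1}$. The hypothesis that $r$ is the mass center of $V$ is used only to guarantee that $r \in \mathrm{int}(\conv(V))$, so that $|\mathcal{K}(V,r)| \simeq \mathbb{S}^{d-1}$ and the balanced homotopy classes live in a genuine sphere, as required for the dichotomy in Theorem~\ref{VExtSph}.
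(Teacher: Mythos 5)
Your proposal is correct and matches the paper's approach: the paper's own proof of Theorems~\ref{A} and~\ref{B} consists of the single sentence that they are ``direct consequences of Theorem~\ref{VExtSph},'' and your argument supplies exactly the missing details — passing from the coloring to the open-star cover $\mathcal{U}_L(T)$, checking that its restriction to $\partial B^n$ is the star cover of the boundary coloring and realizes $[L_{(V,r)}]$, noting that the whole cover on $B^n$ has an $r$-balanced-free nerve under the contradictory hypothesis, and then invoking Theorem~\ref{VExtSph} with $k = n-1$ to force $[L_{(V,r)}] = 0$ on the boundary, a contradiction.
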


\medskip 

\begin{proof}
    These Theorems \ref{A} and \ref{B} are direct consequences of Theorem \ref{VExtSph}. 
\end{proof}

We note that the condition  ``homotopically non-trivial'' on the boundary holds for classical fixed-point theorems. In particular, these include Sperner's coloring,  KKM covering, and antipodal coloring on the boundary.

As a consequence of Theorems \ref{A} and \ref{B}, by studying balanced sets of vertices of polyhedra, one can obtain discrete versions of fixed-point theorems.

\medskip 

\begin{theorem}[KKM Lemma]
Let   $\Delta^{n-1} = \conv\{v_1, \dots, v_n\}$ be a simplex and let $\mathcal{C}=\{C_1, \dots, C_n\}$ be a closed cover of $\Delta^{n-1}$ such that for every face $\Delta_J = \conv\{v_j\}_{j \in J}$ we have $\Delta_J \subset \bigcup_{i \in J}C_j$. Then $\bigcap C_i \neq \emptyset$ .
\end{theorem}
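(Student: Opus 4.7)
The plan is to apply Theorem~\ref{A} with the pair $(V,r)$ where $V=\{v_1,\dots,v_n\}$ are the vertices of the simplex $\Delta^{n-1}$ themselves, regarded as points of $\mathrm{Aff}(\Delta^{n-1})\cong\mathbb{R}^{n-1}$ (so $V$ is of full rank $d=n-1$), and $r$ is their barycenter, which by symmetry is the mass center of $V$. The underlying topological ball in Theorem~\ref{A} is taken to be $\Delta^{n-1}$ itself.

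The key combinatorial input is the identification $\bs(V,r)=\{V\}$: indeed $r\in\mathrm{relint}(\conv(V))$, whereas for any proper $S\subsetneq V$ the convex hull $\conv(S)$ is a proper face of the simplex and therefore does not contain $r$. Consequently the conclusion of Theorem~\ref{A}---the existence of some balanced $S\in\bs(V,r)$ with $\bigcap_{v_i\in S}C_i\neq\emptyset$---collapses to the single statement $\bigcap_{i=1}^{n}C_i\neq\emptyset$, which is exactly the KKM conclusion.

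It remains to verify the hypothesis of Theorem~\ref{A}, namely that $[\mathcal{C}_{(V,r)}|_{\partial\Delta^{n-1}}]$ is defined and nontrivial in $\pi_{n-2}(S^{n-2})\cong\mathbb{Z}$. If $\bigcap_{i}C_i$ already meets $\partial\Delta^{n-1}$, the lemma is immediate; otherwise the boundary nerve contains no $r$-balanced simplex and the class is well-defined. Using the KKM condition on each face, one constructs a partition of unity $\Phi=\{\varphi_1,\dots,\varphi_n\}$ subordinate to $\mathcal{C}$ which is \emph{face-adapted}, meaning $\mathrm{supp}(\varphi_i)\cap\Delta_{[n]\setminus\{i\}}=\emptyset$ for every $i$. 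Such a $\Phi$ forces $\rho_{\Phi,V}$ to send each face $\Delta_J$ into itself, so the linear homotopy $H_t(x)=(1-t)x+t\rho_{\Phi,V}(x)$ restricts on $\partial\Delta^{n-1}$ to a homotopy within $\partial\Delta^{n-1}=P(V,r)$ from the identity to $\rho_{\Phi,V}|_{\partial}$. Post-composing with $\tau_r$, which restricts to a degree-$(\pm1)$ homeomorphism $\partial\Delta^{n-1}\to S^{n-2}$, yields $|\deg(g_{\Phi,V,r}|_{\partial})|=1$, so the boundary class is nontrivial and Theorem~\ref{A} applies.

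The main obstacle is the construction of the face-adapted partition of unity. This is an inductive normality-type argument carried out over the skeleta of $\Delta^{n-1}$: at each stage one shrinks each $C_i$ away from the opposite facet $\Delta_{[n]\setminus\{i\}}$, and the KKM condition $\Delta_J\subset\bigcup_{i\in J}C_i$ on lower-dimensional faces is precisely what guarantees that the shrunken family still covers $\Delta^{n-1}$ and has the same nerve. Once this technical step is settled, Theorem~\ref{A} delivers the KKM conclusion immediately.
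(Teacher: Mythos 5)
Your proposal is correct and follows essentially the same route as the paper: take $V$ to be the vertex set of $\Delta^{n-1}$ (in its affine hull so that $(V,r)$ is of full rank), $r$ the barycenter, observe $\bs(V,r)=\{V\}$, and invoke Theorem~\ref{A}. The one point where you go beyond the paper is that the paper simply asserts $\deg(\mathcal{C}_{(V,r)})=1$, whereas you sketch a proof of it via a face-adapted partition of unity and the straight-line homotopy on $\partial\Delta^{n-1}$; your sketch is sound (and the compactness/Lebesgue-number argument you allude to for the shrinking step does go through), though the side remark that the shrunken cover ``has the same nerve'' is unnecessary for the argument and not obviously true.
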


\begin{proof}
Let $V$ be the set of vertices of $\Delta^{n-1}$ in $\mathbb{R}^{n}$ and let $r$ be the mass center of $V$. Note that $\bs(V,r)=\{V\}$. If $\mathcal{C}$ is a KKM cover, then $\deg(\mathcal{C}_{(V,r)})=1$. Thus, the lemma follows from Theorem \ref{A}.  
\end{proof}

\medskip 

Let $\Delta^{n-1}$ be a simplex of dimension $n$ with vertices $V=\{v_1, \dots, v_n\}$. $T$ be a triangulation of $\Delta^{n-1}$, and let $V(T)$ be the vertex set of the triangulation. We color each vertex $v_i$ with color $i$. We also have a coloring of the vertices of the triangulation, i.e., we have a mapping $L: V(T) \rightarrow \{1, \dots, n\}$, such that if a vertex $t$ lies inside a face $\Delta_{J}=\conv\{v_j\}_{j \in J}$, then $L(t) \in J$. Such a coloring is called Sperner's coloring. A Sperner's coloring can be similarly defined for any polyhedra.

\begin{theorem}[Sperner's Lemma]
For any Sperner coloring of a triangulation of the simplex $\Delta^{n-1}$  in $n$ colors there exists a simplex in the triangulation whose vertices are colored in all colors.
\end{theorem}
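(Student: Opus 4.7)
The plan is to apply Theorem~\ref{B} with the natural choice $V=\{v_1,\dots,v_n\}\subset \mathbb{R}^{n-1}$ given by the vertices of $\Delta^{n-1}$ in its affine span, and with $r$ equal to the barycenter of $V$. The pair $(V,r)$ then has full rank $d=n-1$, the ball is the $(n-1)$-dimensional $\Delta^{n-1}$, and the number of colors equals $m=n$. The key combinatorial observation is that $\bs(V,r)=\{V\}$: the convex hull of any proper subset of $V$ is a proper face of $\Delta^{n-1}$, which does not contain the barycenter. Hence the conclusion ``there exists an $r$-balanced simplex of $T$'' is synonymous with ``there exists a simplex of $T$ whose vertices realize all $n$ colors.''

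It then remains to verify the hypothesis of Theorem~\ref{B}, namely that $[L_{(V,r)}]$ restricted to $\partial\Delta^{n-1}\cong S^{n-2}$ is non-null-homotopic in $[S^{n-2},S^{n-2}]$. Consider the piecewise linear map $f_L:|T|\to\conv(V)=\Delta^{n-1}$ determined on vertices by $f_L(t)=v_{L(t)}$. Because $L$ is Sperner, for every vertex $t$ lying in a face $\Delta_J=\conv\{v_j\}_{j\in J}$ we have $L(t)\in J$, hence $f_L$ carries $\Delta_J$ into itself. In particular, $f_L$ restricts to a self-map of $\partial\Delta^{n-1}$.

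I would then show that this restricted map is homotopic to the identity, and hence has degree $\pm 1$. The straight-line homotopy
\[
H_s(x)=(1-s)\,x+s\,f_L(x),\qquad s\in[0,1],
\]
works: for any $x\in\partial\Delta^{n-1}$ lying in a minimal face $\Delta_J$, the point $f_L(x)$ also lies in $\Delta_J$ by the face-preserving property, so the segment $H_s(x)$ stays in $\Delta_J\subset\partial\Delta^{n-1}$ and in particular avoids $r$. Composing with $\tau_r$ turns $H$ into a homotopy in $S^{n-2}$ between $g_{\Phi,V,r}|_{\partial\Delta^{n-1}}$ and the identity, so $[L_{(V,r)}|_{\partial\Delta^{n-1}}]$ has degree $\pm 1$. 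Theorem~\ref{B} then yields the required fully colored simplex.

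The only delicate point is the face-preservation step in the verification above; once it is established that $f_L$ sends each face of $\partial\Delta^{n-1}$ into itself, avoidance of $r$ along the straight-line homotopy and the degree-one conclusion are immediate. I do not anticipate any further technical obstruction.
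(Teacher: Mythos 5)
Your proposal is correct and follows essentially the same route as the paper: choose $V$ to be the vertex set of $\Delta^{n-1}$ with $r$ its barycenter, observe that $\bs(V,r)=\{V\}$, and reduce to Theorem~\ref{B}. The paper simply asserts $\deg(L_{(V,r)})=1$, whereas you fill in this step via the face-preservation property of $f_L$ and the straight-line homotopy to the identity on $\partial\Delta^{n-1}$ -- a correct and worthwhile elaboration of what the paper leaves implicit.
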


\begin{proof}
Let $V$ be the set of vertices of $\Delta^{n-1}$ in $\mathbb{R}^{n}$ and let $r$ be the mass center of $V$. Note that $\bs(V,r)=\{V\}$. Note that $\text{deg}(L_{(V,r)})=1$ and the theorem follows from Theorem \ref{B}

\end{proof}

\medskip 

The following theorem was obtained by Shapley in \cite{Sh}.

\begin{theorem}[KKMS Lemma]
    Consider the simplex $\Delta^{n-1}=\conv\{v_1, \dots, v_n\}$. Let $V=\{v_{I} \mid I \subset [n]\}$, where $v_I$ is the mass center of the face $\Delta_I=\conv({v_i})_{i \in I}$ and let $r$ be the mass center of the whole simplex $\Delta^{n-1}$.

    Let $C=\{C_{I}\}_{I \subset [n]}$ be a covering of the simplex $\Delta^{n-1}$ such that every face $\Delta_J$ is covered by the sets $\{C_I\}_{I \subset J}$. Then there exists a balanced set $S \subset V$  such that $r \in \conv(S)$ and  $\bigcap_{I \in S}C_I\neq \emptyset$.
\end{theorem}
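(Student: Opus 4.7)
The plan is to invoke Theorem \ref{A} for the cover $\mathcal{C}$ of the $(n-1)$-ball $\Delta^{n-1}$ with the pair $(V,r)$ as given. First I would verify the hypotheses: the set $V = \{v_I : \emptyset \neq I \subset [n]\}$ has rank $n-1$ with $\conv(V) = \Delta^{n-1}$, and by the $S_n$-symmetry permuting vertices of $\Delta^{n-1}$, the centroid $\frac{1}{|V|}\sum_I v_I$ is the unique $S_n$-fixed point of the simplex and therefore equals the barycenter $r = v_{[n]}$. Since $r \in \mathrm{int}(\conv(V))$, the pair $(V,r)$ fits the setup of Theorem \ref{A}.

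The core step is to show that $[\mathcal{C}_{(V,r)}]$ restricted to $\partial \Delta^{n-1} \cong S^{n-2}$ is non-null-homotopic, with degree $\pm 1$. Paralleling the proofs of Sperner's and the KKM lemmas given above, I would realize this homotopy class via a Sperner-type coloring: after using Lemma \ref{ClosedLemma} to pass to an open thickening of $\mathcal{C}$ with the same nerve, I take a sufficiently fine triangulation $T$ of $\Delta^{n-1}$ compatible with its face stratification, and label each vertex $u \in V(T)$ by some $I(u)$ with $u \in C_{I(u)}$ and $I(u) \subset J(u)$, where $J(u)$ is the smallest closed face containing $u$. Such an $I(u)$ exists precisely because of the KKMS covering condition $\Delta_J \subset \bigcup_{I \subset J} C_I$. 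By the discussion preceding Definition \ref{VHomCov}, the homotopy class $[L_{(V,r)}]$ of the induced coloring agrees with $[\mathcal{C}_{(V,r)}]$.

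The payoff of the adapted labeling is face-preservation: for any $x \in \Delta_J$, the piecewise-linear map $f_L$ sends $x$ to a convex combination of points $v_{I(u)}$ with $I(u) \subset J$, so $f_L(x) \in \conv\{v_I : I \subset J\} = \Delta_J$. Consequently the straight-line homotopy $H_t(x) = (1-t)x + t f_L(x)$ keeps each face invariant; on $\partial \Delta^{n-1}$ it stays within $\partial \Delta^{n-1}$ and thus avoids the interior point $r$. This shows $f_L|_{\partial \Delta^{n-1}}$ is homotopic in $\Delta^{n-1}\setminus\{r\}$ to the boundary inclusion, so composing with $\tau_r$ identifies $[\mathcal{C}_{(V,r)}]|_{\partial}$ with the homeomorphism $\tau_r|_{\partial \Delta^{n-1}} : \partial \Delta^{n-1} \to S^{n-2}$ of degree $\pm 1$. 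Theorem \ref{A} then produces the desired balanced $S \in \bs(V,r)$ with $\bigcap_{I \in S} C_I \neq \emptyset$.

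The main obstacle is the refinement-and-labeling step: one must verify, via a Lebesgue-number argument on the open thickening, that a fine enough triangulation $T$ admits the described KKMS-adapted labeling, and that the resulting coloring's homotopy class relative to $(V,r)$ genuinely coincides with $[\mathcal{C}_{(V,r)}]$. Once this technical identification is in place, the face-preservation and straight-line homotopy arguments proceed in formal parallel with the proofs of Sperner's and the KKM lemmas.
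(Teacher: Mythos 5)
Your proposal takes the same route as the paper: the paper simply asserts $\deg(C_{(V,r)})=1$ and invokes Theorem~\ref{A}, while you supply the details the paper leaves implicit --- the $S_n$-symmetry check that $r$ is indeed the mass center of $V$ (a hypothesis of Theorem~\ref{A}), and the face-preserving Sperner-type labeling together with the straight-line homotopy to actually compute that the boundary restriction has degree $\pm1$. This is a correct and welcome filling-in of the paper's one-line argument rather than a different proof.
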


\begin{proof}
    Note that $\deg(C_{(V,r)})=1$. Thus, the KKMS lemma is a direct consequence of Theorem \ref{A}.
\end{proof}

Alternative formulations and direct proofs of the above theorems can be found in different works, for example in \cite{DeLoeraPetersomSu,Herings,Komiya}.

\medskip

\textbf{Acknowledgement.} We wish to thank Roman
Karasev and Oleg Musin for helpful discussions and comments.

\end{document}